\documentclass[11pt,a4paper,reqno]{amsart}

\usepackage{graphicx} 

\usepackage{graphicx}
\usepackage{amsmath}
\usepackage{amssymb}
\usepackage{pst-node}

\newtheorem{thm}{Theorem}[section]

\newtheorem{lem}[thm]{Lemma}

\newtheorem{cor}[thm]{Corollary}
\newtheorem*{cor*}{Corollary}
\newtheorem{question}[thm]{Question}

\newtheorem{defn}[thm]{Definition}

\theoremstyle{remark}

\usepackage[shortlabels]{enumitem}

\newcommand{\cref}[1]{Corollary \ref{#1}}

\newcommand{\map}[4]{#1:#2^{#3}\rightarrow#4}
\newcommand{\inner}[2]{\langle#1,#2\rangle}

\newcommand{\N}{\mathbb{N}} 
\newcommand{\Z}{\mathbb{Z}} 
\newcommand{\C}{\mathbb{C}} 

\newcommand{\R}{\mathbb{R}}

\renewcommand{\S}{\mathbb{S}}
\newcommand{\RP}{\mathbb{RP}} 

\newcommand{\CP}{\mathbb{CP}} 
\newcommand{\HP}{\mathbb{HP}} 
\newcommand{\OP}{\mathbb{OP}}

\newcommand{\HH}{\mathbb{H}}
\newcommand{\F}{\pmb{F}}

\usepackage[sorting=nyt,style=numeric,citestyle=numeric]{biblatex} 

\DeclareFieldFormat[article, inbook]{title}{#1,} 
\DeclareFieldFormat[article]{pages}{#1}
\DeclareFieldFormat{journaltitle}{#1\isdot,}
\DeclareDelimFormat[bib]{nametitledelim}{: }
\renewbibmacro{in:}{}

\renewbibmacro*{issue+date}{%
  \ifboolexpr{test {\iffieldundef{year}}
              and test {\iffieldundef{issue}}}
    {}
    {\printtext[parens]{%
       \printfield{issue}%
       \setunit*{\addspace}%
       \usebibmacro{date}}}%
  \newunit}

\usepackage{blindtext}
\usepackage{hyperref}

\title[Oriented Zoll families in CROSSes]{Oriented Zoll families of minimal hypersurfaces in $\CP^n$ and $\HP^n$}
\author{Diego Guajardo}
\address{D. Guajardo: ICMC - USP \\ São Carlos SP 13566-590 Brazil}
\email{diegonguajardo@usp.br}

\thanks{D. G. was financed, in part, by the São Paulo Research Foundation (FAPESP), Brasil. Grant 2024/03493-2 and 2025/05348-2.}

\begin{document}

\begin{abstract}
    Let $M$ be either $\CP^n$ or $\HP^n$.
	We prove that the family $\overline{\mathcal{F}}=\{\Sigma_\sigma\}_{\sigma\in M}$ of minimal geodesic spheres in $M$ characterizes the Fubini-Study metric $\overline{g}$, that is, $\overline{g}$ is the only metric that makes these real hypersurfaces minimal.
    This result is related to the recent theory of Zoll families of minimal hypersurfaces $(M,g,\mathcal{F}=\{\Sigma_\sigma\})\in\mathcal{Z}$.

    It is conjectured that the only Zoll metric in $\CP^n$ is $\overline{g}$.
    We present evidence that suggests a similar rigidity of $(\CP^n,\overline{g},\overline{\mathcal{F}})$ in this context of minimal hypersurfaces.
    Namely, we show that a direct adaptation of Guillemin's Theorem for conformal deformations of Zoll metrics in $\S^2$ is not possible for $(\CP^n,\overline{g},\overline{\mathcal{F}})$, when $p=2n-1$ is a prime number.
\end{abstract}
\maketitle
\section{Introduction}

Consider a connected Riemannian manifold $(M^n,g)$.
We say that the metric $g$ is \textit{Zoll} if all geodesics are closed, simple, and of the same length.
For example, the round metric $\overline{g}$ on the sphere $\S^n$ or on the real projective space $\RP^n$ is a Zoll metric. 
In fact, the complex projective space $\CP^n$, the quaternionic projective space $\HP^n$, and the Cayley plane $\OP^2$ also have a natural Zoll metric $\overline{g}$.
For simplicity, we call $\overline{g}$ the Fubini-Study metric in all cases.
These manifolds form the family of the CROSSes (compact rank-one symmetric spaces).

Any manifold $M$ with a Zoll metric shares topological properties with one of the CROSSes. 
For example, $M$ must be compact and it is either simply connected or $\pi_1(M)=\Z/2\Z$.
Remarkably, Bott-Samelson Theorem asserts that the cohomology ring of $M$ coincides with one of the CROSSes (see Theorem 7.23 in \cite{Bes}).

Notably, $\S^n$ has many Zoll metrics. 
At the beginning of the $20^{th}$-century, O. Zoll \cite{Zoll} found spheres of revolution in $\R^3$ with this property. 
Later, A. Besse made this construction intrinsic, and A. Weinstein found Zoll metrics for spheres of any dimension (see \cite{Bes} for a general discussion of the topic). 

In contrast to spheres, the only Zoll metrics in $\RP^n$ are the round ones; L. Green \cite{Gre} proved this for $n=2$, and the case $n>2$ was solved by M. Berger (see \cite{Bes} Corollary D.2 with Theorem 7.23).
It is conjectured that the same rigidity holds for the other projective spaces.

Recently, L. Ambrozio, F. Marques, and A. Neves \cite{AMN} generalized the notion of Zoll metrics in the context of minimal hypersurfaces. 
Namely, given a connected Riemannian manifold $(M^n,g)$, a \textit{Zoll family of minimal hypersurfaces} is a set $\mathcal{F}=\{\Sigma_\sigma\}$ of minimal hypersurfaces satisfying
\begin{itemize}
	\item[$a)$] For every point $(p,\pi)\in Gr_{n-1}(M)$ in the Grassmannian of (unoriented) tangent hyperplanes, there exists a unique $\Sigma_\sigma$ in the family such that $T_p\Sigma_\sigma=\pi$; and
	\item[$b)$] The map $(p,\pi)\in Gr_{n-1}(M)\mapsto \Sigma_\sigma$ is smooth.
\end{itemize} 
Such a family induces a foliation $\hat{\mathcal{F}}:=\{\hat{\Sigma}_\sigma\}$ in $Gr_{n-1}(M)$; $\hat{\Sigma}_\sigma\subseteq Gr_{n-1}(M)$ is the lift of $\Sigma_\sigma$ via $x\in\Sigma_\sigma\rightarrow(x,T_x\Sigma)\in Gr_{n-1}(M)$.
This is analogous to Zoll metrics; the geodesic flow of a Zoll metric induces a foliation with closed leaves in $Gr_{1}(M)$, the Grassmannian of (unoriented) lines.

There are two natural examples of manifolds with Zoll families of minimal hypersurfaces: $\S^n$ and $\RP^n$ with the round metrics and the family of equators $\overline{\mathcal{F}}:=\{\Sigma_\sigma\}_{\sigma\in\RP^n}$.
There are non-trivial metrics in $\S^n$ and $\RP^n$ with Zoll families of minimal hypersurfaces (see \cite{AmbrozioMinimalEquators}, \cite{AMN}, and \cite{AGequiv}).
This contrasts with the case of geodesics; the only Zoll metrics in $\RP^n$ are the round ones. 

Recently, G. Martins \cite{GustavoZollcharact} proved that if $(M^n,g)$ has a Zoll family of closed minimal hypersurfaces $\mathcal{F}=\{\Sigma_\sigma\}_{\sigma\in P}$, then $M^n$ is diffeomorphic to $\S^n$ or $\RP^n$.
Moreover, $\Sigma_\sigma\cong\S^{n-1}$ in the first case, $\Sigma_\sigma\cong\RP^{n-1}$ in the second, and $P$ is homeomorphic to $\RP^n$.
Hence, the existence of such a Zoll family imposes a more rigid structure (from the topological point of view) than the classical notion of Zoll metric. 
In particular, the remaining CROSSes do not have a metric with such a family.

In this work, we relax the definition of a Zoll family of minimal hypersurfaces, with this modification, any CROSS $M$ has a natural family $\overline{\mathcal{F}}=\{\Sigma_{\sigma}\}_{\sigma\in M}$ of minimal hypersurfaces which is Zoll-like.

We say that a family $\mathcal{F}=\{\Sigma_\sigma\}$ of orientable minimal hypersurfaces in $M$ is an \textit{oriented Zoll family of minimal hypersurfaces} if it naturally induces a foliation in the Grassmannian of oriented hyperplanes $Gr^+_{n-1}(M)$ (instead of $Gr_{n-1}(M)$).
In particular, through any hyperplane $\pi\subseteq T_xM$ there are two (not necessarily distinct) minimal hypersurfaces tangent to $\pi$; one for each orientation.
Observe that the geodesic flow of a Zoll metric also induces a foliation in $Gr_1^+(M)$, the Grassmannian of oriented lines.

Let $(M,\overline{g})$ be a CROSS. Assume first that $M\neq\RP^n$.
As we will see, there is a unique number $d_M\in(0,\pi/2]$ such that, for $\sigma\in M$,
$$\Sigma_\sigma:=\{x\in M: dist_{\overline{g}}(\sigma,x)=d_M\}\subseteq(M,\overline{g}),$$ 
is a minimal hypersurface. 
Each $\Sigma_\sigma$ has a natural orientation; the radial direction pointing to $\sigma$.
The family $\overline{\mathcal{F}}=\{\Sigma_\sigma\}_{\sigma\in M}$ is an oriented Zoll family of minimal hypersurfaces.
We call this family the \textit{canonical family of minimal geodesic spheres}.
We must be more careful for $M=\RP^n$, since the equator orthogonal to $\sigma\in\RP^n$ is not orientable for $n$ odd. In this case, we consider $\Sigma_\sigma$ as the double cover of such equator.

When $M=\S^n$ or $\RP^n$, the foliation of $Gr_{n-1}^+(M)$ induced by $\overline{\mathcal{F}}$ has a natural compatibility with the involution $(p,\pi)\rightarrow(p,-\pi)$ of $Gr_{n-1}^+(M)$.
This condition recovers the definition of Zoll families discussed above.

Suppose that $(M^n,g)$ has a Zoll family of (oriented) minimal hypersurfaces $\mathcal{F}=\{\Sigma_\sigma\}$.
T. Hangan \cite{HanganMinimalHyperplanes} presented a strategy to find non-trivial metrics in $M^n$ with such a family.
Namely, fix the family $\mathcal{F}$ and find the metrics that make the hypersurfaces of $\mathcal{F}$ minimal.
With this, T. Hangan found non-flat metrics in the Euclidean space $\R^n$ with minimal affine hyperplanes. 
This strategy was adapted to $\S^n$ (see survey \cite{AmbrozioMinimalEquators}).
In particular, there are non-trivial metrics in $\S^n$ for which all equators are minimal. 
Surprisingly, these metrics are antipodally symmetric. 
Hence, $\RP^n$ also has non-trivial metrics with all equators being minimal.

The first result of this work analyzes Hangan's strategy for the spaces $\CP^n$ and $\HP^n$.
In contrast to $\RP^n$, the minimality of the elements of $\overline{\mathcal{F}}$ characterizes the metric.

\begin{thm}\label{Thm A CPn}
    Let $M$ be either $\CP^n$ or $\HP^n$ ($n\geq2$), let $\overline{g}$ bet the Fubini-Study metric, and consider the canonical family of minimal geodesic spheres $\overline{\mathcal{F}}=\{\Sigma_\sigma\}$. 
    If $g$ is a Riemannian metric such that $\Sigma_\sigma\subseteq (M,g)$ is a minimal hypersurface for all $\Sigma_\sigma\in\overline{\mathcal{F}}$, then $g=\lambda\overline{g}$ for some constant $\lambda\in\R_{>0}$. 
\end{thm}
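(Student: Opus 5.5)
The plan is to linearize the minimality condition around the symmetric structure. Write $g=\overline{g}(A\cdot,\cdot)$ with $A$ a $\overline{g}$-self-adjoint positive endomorphism field. The mean curvature of a hypersurface $\Sigma$ with $\overline{g}$-unit normal $\nu$ changes, under a change of metric, by a first-order expression in $A$ and $\nabla A$ along $\Sigma$. We know $\Sigma_\sigma$ is $\overline{g}$-minimal and, being a tube of radius $d_M$, has explicitly known principal curvatures. So the condition $H_g(\Sigma_\sigma)=0$ becomes a pointwise identity
\[
\Phi(A,\nabla A)(x,\nu_\sigma(x))=0\qquad\text{for all }x\in\Sigma_\sigma,\ \sigma\in M .
\]
The key observation is that the set of normals $\{\nu_\sigma(x)\}$, over all $\sigma$ at distance $d_M$ from $x$, fills the whole unit sphere of $T_xM$. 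Indeed, $\sigma=\exp_x(d_M v)$ for every unit $v$, since $d_M\le\pi/2$ is below the injectivity radius. Hence the identity holds for every unit vector $\nu\in T_xM$ at every point $x$. The resulting expression is homogeneous in $\nu$ (after multiplying by a suitable power of $g(A^{-1}\nu,\nu)$), so it becomes a polynomial identity in $\nu$ that decomposes into components.

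The first step is to derive this formula concretely. Let $X_1,\dots,X_{m-1}$ be a $\overline{g}$-frame of $T\Sigma$. The $g$-mean curvature is proportional to
\[
\operatorname{tr}_{g|_\Sigma}\bigl(\mathrm{II}^{g}\bigr)=\sum g^{ij}_\Sigma\, g\bigl(\nabla^g_{X_i}X_j,N_g\bigr),
\qquad N_g\propto A^{-1}\nu .
\]
Using the Koszul formula, $\nabla^g=\overline\nabla+\tfrac12 A^{-1}(\ldots\nabla A\ldots)$, this splits into two parts: a zeroth-order term involving $A$ and the $\overline{g}$-second fundamental form $\overline{\mathrm{II}}$ of the geodesic sphere, and a first-order term in $\nabla A$. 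For $\CP^n$ the shape operator of the tube is explicit: it has eigenvalue $2\cot(2d_M)$ on $J\nu$ and $\cot d_M$ on the orthogonal complement. For $\HP^n$ it has eigenvalue $2\cot(2d_M)$ on $\mathrm{span}\{I\nu,J\nu,K\nu\}$ and $\cot d_M$ on the rest. Here $d_M$ is chosen so that the trace vanishes. The crucial difference from the $\S^n$ case, where $\overline{\mathrm{II}}=0$ and only the $\nabla A$ term survives, is that the zeroth-order term is nonzero. It is also not $O(n)$-invariant in $\nu$ but depends on $J\nu$ (respectively $I\nu,J\nu,K\nu$).

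The second step is to separate the identity by parity in $\nu$. Replacing $\nu$ by $-\nu$ reverses the orientation, and both orientations are realized by different spheres (the centers are $\exp_x(\pm d_M v)$). The terms in $\nabla A$ are odd in $\nu$ and the terms in $A$ with $\overline{\mathrm{II}}$ are even, so each part must vanish separately.
\begin{enumerate}[(i)]
\item The even part is an algebraic condition on $A_x$ that must hold for all unit $\nu$. It reads roughly
\[
\cot d_M\bigl(\operatorname{tr}A-\ldots\bigr)+(2\cot 2d_M-\cot d_M)\sum_{\ell}\frac{\overline{g}(A J_\ell\nu,J_\ell\nu)}{\ldots}=0 .
\]
Expanding in $\nu$ and using that $2\cot 2d_M\neq\cot d_M$, I expect this to force $A_x$ to commute with $J$ (respectively to preserve the quaternionic structure) and then $A_x=f(x)\,\mathrm{Id}$. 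This pointwise linear algebra, making the normalization by $g(A^{-1}\nu,\nu)$ tractable, is the step I expect to be the main obstacle. My first attempt is to test it on $\nu$ ranging over eigenvectors of $A$ and over combinations $\cos t\,e_a+\sin t\,e_b$, and compare Fourier coefficients in $t$.
\item Once $g=f\overline{g}$ is conformal, the odd part reduces to the classical conformal formula
\[
H_g=f^{-1/2}\Bigl(H_{\overline g}+\tfrac{m-1}{2}\,\partial_\nu\log f\Bigr).
\]
Since $H_{\overline g}=0$ on every sphere, this gives $\partial_\nu f=0$ for all $\nu$, so $f$ is constant because $M$ is connected.
\end{enumerate}

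An alternative for step (i), if the direct algebra is messy, is to first handle the second-order (infinitesimal) version, where $g=\overline{g}+\epsilon h$. There the condition is linear and the zeroth-order term is $\sum_\ell h(J_\ell\nu,J_\ell\nu)-c\,h(\nu,\nu)+c'\operatorname{tr}h$. This can be analyzed by decomposing $h_x$ under $U(n)$ (respectively $Sp(n)Sp(1)$) and checking that only the pure-trace component survives. This representation-theoretic picture would then guide the choice of test vectors in the nonlinear argument. I would use the fact that $n\ge2$ here, since for $n=1$ the extra eigenspace disappears and the statement fails.
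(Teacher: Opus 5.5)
Your parity splitting is exactly the paper's device (Lemma~\ref{lemma tilde-sigma} and Corollary~\ref{Cor H=0 in two equations}). The sphere centred at $\exp_x(-d_Mv)$ has the same tangent plane at $x$ as the one centred at $\exp_x(d_Mv)$, but the opposite normal and the opposite vector-valued $\overline g$-second fundamental form. Hence the $\overline{\mathrm{II}}$-part and the $\nabla A$-part of $H_g$ vanish separately.

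Your endgame is genuinely simpler than the paper's. The paper turns the odd equation into the statement that $k=F^{-1}g$ is a $\overline\nabla$-Killing tensor. After getting $\overline g=\lambda k$ pointwise, it uses the Killing equations to make $\lambda$ constant and a volume-form comparison to make $F$ constant. Once $g=f\overline g$ pointwise, your conformal formula with $H_{\overline g}=0$ and $\nu$ arbitrary gives $df=0$ directly, so no Killing tensors are needed.

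The gap is step (i). It is where the theorem's content lies, and you only state it as an expectation. Your schematic equation should involve $B:=A^{-1}$, not $A$: tracing over $(\nu^\perp,g)$ gives $\operatorname{tr}B-|B\nu|^2/\overline g(B\nu,\nu)$ and $\overline g(BJ_\ell\nu,J_\ell\nu)-\overline g(B\nu,J_\ell\nu)^2/\overline g(B\nu,\nu)$. Using $\tan^2 d_M=\tfrac{4n-1}{3}$ (resp.\ $2n-1$) and clearing denominators, the even condition for $\HP^n$ is
\[
(4n-1)\sum_{\ell}\Big(\overline g(B\nu,\nu)\,\overline g(BJ_\ell\nu,J_\ell\nu)-\overline g(B\nu,J_\ell\nu)^2\Big)=3\Big(\overline g(B\nu,\nu)\operatorname{tr}B-|B\nu|^2\Big)|\nu|^2 .
\]
For $\CP^n$, replace $(4n-1,3)$ by $(2n-1,1)$ and the sum by the single term $J$. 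This is the identity of the paper's Appendix. It does not automatically force $B$ to be scalar: for $n=1$ it is vacuous. So an argument is needed. The paper uses a minimization over the vectors $N_v^*$ for $\CP^n$, and for $\HP^n$ the two- and three-index coefficients plus a case analysis on eigenspace multiplicities.

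The test you suggest does close the gap, and faster. Restrict to $\nu=\cos t\,e_a+\sin t\,e_b$ with $e_a,e_b$ eigenvectors of $B$. The $(\nu^a)^4$ and $(\nu^a)^3\nu^b$ coefficients give
\[
\sum_\ell J_\ell^{T}BJ_\ell=\tfrac{3}{4n-1}(\operatorname{tr}B\,\mathrm{Id}-B)
\qquad\text{(resp.\ } J^{T}BJ=\tfrac{1}{2n-1}(\operatorname{tr}B\,\mathrm{Id}-B)\text{)}.
\]
Now split $B$ into joint eigencomponents of the commuting involutions $B\mapsto J_\ell^{T}BJ_\ell$. The non-invariant components satisfy $(4n-4)B_I=0$ (resp.\ $(2n-2)B_-=0$). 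The invariant component satisfies $4nB_0=\operatorname{tr}B\,\mathrm{Id}$ (resp.\ $2nB_+=\operatorname{tr}B\,\mathrm{Id}$). So $B$ is scalar exactly because $n\geq2$. With this computation written out, your argument would be a complete proof.
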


From a technical point of view, Hangan's strategy for $(M^n,\overline{g},\overline{\mathcal{F}})$ requires dealing with quadratic Killing tensors $k$, that is, tensors that satisfy
$$\overline{\nabla}k(X,X,X)=0,\quad\forall X\in TM,$$
where $\overline{\nabla}$ is the Levi-Civita connection associated with $\overline{g}$.
As shown in \cite{MatveevNikolayevskyKillingNODecomposable}, the space of such tensors is fundamentally different for each CROSS.
It is not clear whether the last theorem also holds for $\OP^2$ due to this discrepancy.

We comment that L. Luzzi in \cite{Luzzi} used Hangan's strategy to study the families of projective subspaces $\CP^{k}$ in $\CP^n$ ($k<n$).
Similar questions can be asked in $\HP^n$ and $\OP^2$; these spaces have natural families of totally geodesic submanifolds (but real codimension bigger than one).

\text{ }

P. Funk \cite{Funk} presented a perturbative strategy to produce Zoll metrics on $\S^2$; search for a $1$-parameter family of Zoll metrics $g_t=e^{2\psi_t}\overline{g}$, where $\psi_t\in C^\infty(\S^2)$ and $\psi_0\equiv1$. 
He showed a necessary condition; the function $f:=\frac{d}{dt}\big|_0\psi_t\in C^{\infty}(\S^2)$ must be odd. 
Sixty years later, V. Guillemin \cite{Guillemin} proved the converse.  
Due to obstruction shown by K. Kiyohara \cite{Kiyohara}, it is not clear how to generalize this result to $\S^n$ for $n\geq3$.

In \cite{AMN}, they generalize Guillemin's result for the minimal context; given an odd function $f\in C^\infty(\S^n)$ there is a $1$-parameter family of metrics $g_t=e^{2\psi_t}\overline{g}$ in $\S^n$ with a Zoll family of minimal hypersurfaces $\mathcal{F}_t=\{\Sigma_\sigma^t\}$ (smooth in $t$) with $\psi_0\equiv0$ and $\frac{d}{dt}\big|_{t=0}\psi_t=f$.
The author of this work and L. Ambrozio in \cite{AGequiv} used a similar strategy to find Euclidean examples with a prescribed group of symmetry $G\subseteq O(n+1)$. 

It is natural to ask for an adaptation of the results discussed before for the remaining CROSSes, for example, to $(\CP^n,\overline{g},\overline{\mathcal{F}})$.
In particular, we have the following natural question:

\begin{question}\label{question exist one parameter family}
    Is there a smooth family of Riemannian metrics in $\CP^n$, say $g_t$ with $t\in(-\varepsilon,\varepsilon)$, having an oriented Zoll family of minimal hypersurfaces $\mathcal{F}_t=\{\Sigma_{\sigma}^t\}$ (smooth in $t$) with $(g_t,\mathcal{F}_t)|_{t=0}$ being the canonical family $(\overline{g},\overline{\mathcal{F}})$?
\end{question}
If such a family exists, the area of $\Sigma_\sigma^t\in\mathcal{F}_t$ does not depend on $\sigma\in\CP^n$ (first variation formula).
Moreover, rescaling $g_t$ for each $t$, we can assume that the area also does not depend on $t$. 
When this holds, we write $g_t\in\mathcal{Z}$.

The next result shows that a direct adaptation of the results of \cite{AMN} for a conformal deformation $g_t=e^{2\psi_t}\overline{g}\in\mathcal{Z}$ is not possible, for example in $\CP^2$.
Indeed, the proof given in \cite{AMN} requires non-trivial information of first order.

\begin{thm}\label{Thm B for CPn with 2n-1 prime}
    Suppose that $\CP^n$ has a $1$-parameter family of Riemannian metrics $g_t=e^{2\psi_t}\overline{g}$ with $g_0=\overline{g}$ and $g_t\in\mathcal{Z}$.
    If $p=2n-1$ is a prime number then $\frac{d}{dt}\big|_{0}\psi_t\equiv0$.
\end{thm}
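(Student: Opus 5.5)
We linearize the problem at $t=0$. Write $f:=\frac{d}{dt}\big|_0\psi_t$, and let $\Sigma_\sigma^t$ be the members of the oriented Zoll family $\mathcal{F}_t$, deforming $\Sigma_\sigma$ with normal variation $\phi_\sigma\,N_\sigma$ at $t=0$ (here $N_\sigma$ is the $\overline{g}$-unit normal).

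\textbf{Step 1: the linearized equation.} Under a conformal change $e^{2\psi}\overline{g}$ in real dimension $m=2n$, the mean curvature changes by
\[
H_{e^{2\psi}\overline g}=e^{-\psi}\bigl(H_{\overline g}+(m-1)\,\partial_N\psi\bigr).
\]
Differentiating the minimality of $\Sigma^t_\sigma$ at $t=0$ gives
\[
L_\sigma\phi_\sigma+(2n-1)\,\partial_{N_\sigma}f=0 \quad\text{on }\Sigma_\sigma,
\]
where $L_\sigma$ is the Jacobi operator of $\Sigma_\sigma$ in $(\CP^n,\overline{g})$.

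\textbf{Step 2: the area constraint.} Since $g_t\in\mathcal{Z}$, the area of $\Sigma^t_\sigma$ is independent of $\sigma$ and $t$. Differentiating at $t=0$, and using the first variation formula (minimality kills the $\phi_\sigma$ term), gives
\[
\int_{\Sigma_\sigma} f\,dA=0\quad\text{for all }\sigma.
\]
Thus $f$ lies in the kernel of the Radon-type transform $R f(\sigma)=\int_{\Sigma_\sigma}f$. The transform $R$ commutes with the isometry group $SU(n+1)$, so it acts by a scalar $r_k$ on each eigenspace $E_k$ of the Laplacian of $\CP^n$, where $E_k$ consists of spherical harmonics of bidegree $(k,k)$. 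These scalars are computable by the Funk–Hecke principle: $r_k$ equals the value at $\cos^2 d_M$ of the zonal spherical function, a Jacobi polynomial $P_k^{(n-1,0)}$ evaluated at the appropriate point, times the area of the sphere. Decomposing $f=\sum f_k$, the area condition forces $f_k=0$ whenever $r_k\neq0$.

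\textbf{Step 3: the Jacobi-operator condition.} Integrating the equation of Step 1 against Jacobi fields of $\Sigma_\sigma$ gives a second set of constraints. Every Killing field of $\CP^n$, and also the infinitesimal motions $\sigma\mapsto$ nearby centers, project to elements of $\ker L_\sigma$. Self-adjointness of $L_\sigma$ then gives
\[
\int_{\Sigma_\sigma}\partial_{N}f\;u\,dA=0\quad\text{for all }u\in\ker L_\sigma.
\]
Pushed through the same harmonic decomposition, this yields, on each $E_k$, a second scalar multiplier $s_k$ built from the derivative of the zonal function. So $f_k$ survives only if $r_k=s_k=0$, that is, only if the zonal Jacobi polynomial and its derivative vanish simultaneously at $x_0=\cos(2d_M)$. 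A double root is impossible because the polynomial solves a second-order ODE with no singular point at $x_0$. The only escape is that $x_0$ is a root but the second condition degenerates, for instance because the Jacobi space is larger than expected or because the constraint coming from $\phi_\sigma$ solvability is weaker. In that case we need a finer arithmetic statement: the Jacobi polynomial $P_k^{(n-1,0)}$ never vanishes at the specific point $x_0$.

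\textbf{Main obstacle.} The hardest step is this arithmetic non-vanishing. The minimal sphere radius $d_M$ satisfies an explicit trigonometric relation, roughly $\tan^2 d_M=2n-1$ from $H=0$ for a geodesic sphere in $\CP^n$. Hence $x_0$ is rational, with denominator a power of $2n$ and numerator tied to $2n-1$, or possibly $\cos^2 d_M=1/(2n)$. I would clear denominators and evaluate $P_k^{(n-1,0)}(x_0)$ as an integer combination $\sum_j \binom{k+n-1}{j}\binom{k}{j}(\cdots)^j$. Then I would show that it is nonzero modulo $p=2n-1$: reducing mod $p$ collapses all terms except one, which is a product of binomial coefficients not divisible by $p$ (by Lucas' theorem). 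This is exactly where primality of $2n-1$ enters. With $r_k\neq0$ for all $k\geq1$, and $k=0$ excluded by $\int f=0$, we get $f\equiv0$.
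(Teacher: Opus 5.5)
Your route is the paper's: the area constraint gives $\F(f)=0$, Schur's lemma makes $\F$ act by a scalar on each $\mathcal{H}_k$, and that scalar is the zonal function evaluated on $\Sigma_{\sigma_0}$ (where $\cos^2 d=\frac{1}{2n}$). After clearing denominators you must show that
\[
S_k=\sum_{j=0}^k s_j,\qquad s_j=\binom{k}{j}\binom{n+k-1}{k-j}(-p)^j,
\]
never vanishes; your Jacobi-polynomial sum is exactly this after $j\mapsto k-j$.

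The gap is in the step you flag as the main obstacle. Reducing mod $p$ leaves only $s_0=\binom{n+k-1}{k}$, and this is \emph{not} prime to $p$ in general. By Kummer's theorem, $p\mid\binom{n+k-1}{k}$ whenever the last base-$p$ digit of $k$ is at least $n$, since adding $n-1$ then produces a carry. For instance, $k=n$ gives $\binom{2n-1}{n}$, which is divisible by $p=2n-1$. Concretely, for $n=2$, $k=2$ one gets $S_2=3-18+9=-6\equiv 0 \pmod 3$. So $S_k\equiv 0 \pmod p$ for infinitely many $k$ (e.g.\ all $k\equiv n \pmod p$). No argument modulo $p$ alone, whether via Lucas or otherwise, can give $S_k\neq 0$.

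The missing idea is to compare $p$-adic valuations instead of reducing mod $p$. You show $v_p(s_j)\geq v_p(s_0)+1$ for all $j\geq 1$, so that $S_k\equiv s_0\not\equiv 0 \pmod{p^{v_p(s_0)+1}}$. The paper obtains this from the identity $s_j=\binom{k}{j}^2\binom{n+k-1}{k}\binom{j+n-1}{j}^{-1}(-p)^j$, which gives $v_p(s_j)\geq v_p(s_0)+j-v_p\binom{j+n-1}{j}$. It then uses the bound $v_p\binom{j+n-1}{j}\leq j-1$. Since $p>n-1$, we have $v_p((n-1)!)=0$, and at most one of $j+1,\dots,j+n-1$ is divisible by $p$, say $j+i_0=p^v a$. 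Hence $j\geq p^v-(n-1)=p^v-\frac{p-1}{2}\geq v+1$. This is where primality of $2n-1$ genuinely enters.

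Separately, your Steps 1 and 3 can be dropped. For the Jacobi fields you actually name, $u=\langle X,N\rangle$ with $X$ Killing, minimality of $\Sigma_\sigma$ and $\mathrm{div}_{\Sigma}X=0$ give $\int_{\Sigma_\sigma}\partial_N f\,u\,dA=X\big(\F(f)\big)(\sigma)$. That constraint is therefore just the derivative of the area constraint and yields no second, independent multiplier. The "double root" discussion does not help either way.
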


The hypothesis $p=2n-1$ being a prime number is used to simplify a non-trivial number theory problem. The author suspects that the last result is valid for any $n\in\N$.

Theorem 1.5 of \cite{AGequiv} gives metrics $g_t$ on $\RP^n$ with $g_t\in\mathcal{Z}$.
However, $\frac{d}{dt}\big|_{t=0}g_t$ is orthogonal to the conformal directions.
In particular, Theorem \ref{Thm B for CPn with 2n-1 prime} does not exclude the possibility of applying this strategy to $\CP^n$; it only excludes the conformal directions.
However, in analogy to the spherical case, Hangan's strategy should integrate directions orthogonal to the conformal directions (see Remark 4.22 of \cite{AGequiv}). 
In this sense, Theorem \ref{Thm A CPn} suggests that it is also not possible to produce metrics in these directions.

Recall that it is conjectured that the only Zoll metric in $\CP^n$ is $\overline{g}$ (up to scaling).
Theorems \ref{Thm A CPn} and \ref{Thm B for CPn with 2n-1 prime} suggest a similar rigidity in this context of minimal hypersurfaces.

\text{ }

The organization of this paper is as follows.
Section \ref{section preliminaries} introduces the basic concepts and notation used in this paper.
We prove Theorem \ref{Thm A CPn} for $\CP^n$ in Section \ref{section Complex projective space}.
In Section \ref{section Quaternionic projective space}, we analyze the proof for $\HP^n$.
We remark that this case requires a more elaborate algebraic argument than in the case of $\CP^n$. 
For organizational purposes, we left this technical part in the Appendix; Section \ref{section appendix}.
Finally, in Section \ref{Section conformal deformations of CPn} we prove Theorem \ref{Thm B for CPn with 2n-1 prime}.

\text{ }

\textbf{AI disclosure.} Gemini Plus was used to verify some computations and for spell-checking.

\section{Preliminaries and notations}\label{section preliminaries}
In this section, we introduce the notation and conventions that will be used in this work.

\text{ }

We denote by $\S^n=\{x\in\R^{n+1}:|x|=1\}$ the $n$-sphere, and let $\overline{g}$ be the Riemannian metric induced from the ambient space.

Consider the Hopf fibrations $\map{\pi}{\S}{2n+1}{\CP^n}$ and $\map{\pi}{\S}{4n+3}{\HP^n}$. 
In the quaternionic case, we use the convention that the action of $Sp(1)=\S^3$ on $\S^{4n+3}$ is by multiplication on the right. 
By forcing $\pi$ to be a Riemannian submersion, we induce Riemannian metrics $\overline{g}$ on $\CP^n$ and $\HP^n$.
We call $\overline{g}$ the \textit{Fubini Study metric}, and denote by $\overline{\nabla}$ the associated Levi-Civita connection.

The Cayley plane $\OP^2$ and its Fubini-Study metric $\overline{g}$ are defined in \cite{GotohTCayley}.
We normalize the metric so that the injectivity radius is $\frac{\pi}{2}$.

Let $(M,\overline{g})$ be either $\S^n$, $\CP^n$, $\HP^n$, or $\OP^2$ with the Fubini-Study metric.
For $\sigma\in M$, let
$$\Sigma_\sigma:=\{x\in M: dist_{\overline{g}}(\sigma,x)=d\}\subseteq(M,\overline{g}),$$
where $d\in\R_{>0}$ is given by
$$d=d_M=\left\{ \begin{array}{lr}
         \frac{\pi}{2}&  \text{ if }\hspace{2mm}M=\S^n\hspace{1mm},\\
         \arccos\Big(\sqrt{\frac{1}{2n}}\Big)&  \text{ if }M=\CP^n,\\
         \arccos\Big(\sqrt{\frac{3}{4n+2}}\Big)&  \text{ if }M=\HP^n,\\
         \arccos\Big(\sqrt{\frac{7}{15}}\Big)&  \text{ if }M=\OP^2.
    \end{array}\right.$$
Since $d_M<inj(M,\overline{g})$, $\Sigma_\sigma\subseteq M$ is a smooth sphere of codimension one. 
As we will see, $d_M$ is the only number in $(0,\frac{\pi}{2}]$ that makes $\Sigma_\sigma\subseteq (M,\overline{g})$ a minimal hypersurface (for $M=\OP^2$ see \cite{GotohTCayley}).
We call $\overline{\mathcal{F}}:=\{\Sigma_\sigma\}_{\sigma\in M}$ the \textit{canonical family of minimal geodesic spheres} of $(M,\overline{g})$.

Suppose that $(M^n,g)$ is a connected Riemannian manifold of dimension $n$, and $Gr_{n-1}^+(M)$ is the Grassmannian of oriented hyperplanes.

\begin{defn}
    \normalfont A family of oriented minimal hypersurfaces $\mathcal{F}=\{\Sigma_\sigma\}_\sigma$ in $(M^n,g)$ is an \textit{oriented Zoll family of minimal hypersurfaces} if it induces a smooth foliation in $Gr^+_{n-1}(M)$.
\end{defn}
If $(M,\overline{g})$ is a CROSS, the canonical family $\overline{\mathcal{F}}$ is an oriented Zoll family of minimal surfaces (see Lemma \ref{lem zoll family} for the case $M=\CP^n$; the cases $\S^n$, $\HP^n$, and $\OP^2$ are analogous).

The case $M=\RP^n$ is more interesting.
If $n$ is even, then we define $\Sigma_\sigma$ as the equator orthogonal to $\sigma\in\RP^n$.
Since $\Sigma_\sigma\cong\RP^{n-1}$ is orientable, it induces two leaves in $Gr_{n-1}^+(\RP^n)$ simply by choosing orientation. 
If $n$ is odd we pick an orientation for $\RP^n$, and consider 
$$\Sigma_\sigma:=\big\{v\in T_\sigma \RP^n:|v|=\frac
{\pi}{2}\big\}\xrightarrow{\exp_\sigma(\cdot)}\RP^n$$ the double cover of the equator orthogonal to $\sigma$, since $\Sigma_\sigma$ is oriented, it induces a single leaf in $Gr_{n-1}^+(\RP^n)$.

Clearly, in the cases of $M=\S^n,\RP^n$, the foliation in $Gr_{n-1}^+(M)$ is compatible with the action $(p,\pi)\rightarrow(p,-\pi)$.
This compatibility allows us to work with the definition given in \cite{AMN}. 
For the remaining CROSSes, the foliation is not compatible with this involution.

In many computations, we have to deal with two metrics: an abstract Riemannian metric $g$ and the Fubini-Study metric $\overline{g}$.
Whenever necessary, we make any dependence of the metric explicit. 
For example, $Hess_{\overline{g}}f$ and $\Delta_{\overline{g}}f$ denote the Hessian and Laplacian of a function $f$ with respect to $\overline{g}$. 
On the other hand, $Hess_gf$, $\nabla^g_XY$, and $tr_g(T)$ are (respectively) the Hessian, the Levi-Civita connection, and the trace of a tensor, all computed with respect to $g$.

\section{The complex projective space case}\label{section Complex projective space}
The goal of this section is to prove Theorem \ref{Thm A CPn} for $\CP^n$. 
Many ideas of this section work for the case $\HP^n$.

\text{ }

Given $\sigma\in\CP^n$ and $d\in(0,\pi/2)$, consider the geodesic ball
$$\Sigma_\sigma=\Sigma_\sigma(d):=\{x\in\CP^n:dist_{\overline{g}}(x,\sigma)=d\}\subseteq\CP^n.$$
Alternatively, consider the Hopf fibration $\pi:\S^{2n+1}\rightarrow\CP^n$ and lift $\Sigma_\sigma$ to obtain a torus $\hat{\Sigma}_\sigma:=\pi^{-1}(\Sigma_\sigma)\subseteq\S^{2n+1}$. 
Namely, for $r\in(0,1)$, consider
\begin{equation}\label{Sigma-hat, torus}
    \hat{\Sigma}_\sigma=\hat{\Sigma}_\sigma(r):=\{(x+y)\in\S^{2n+1}:x\in\sigma,y\in\sigma^{\perp},|x|^2=r^2=1-|y|^2\},
\end{equation}
then $\hat{\Sigma}_\sigma\cong\S^1(r)\times\S^{2n-1}(\sqrt{1-r^2})$ and $\hat{\Sigma}_\sigma(r)=\pi^{-1}(\Sigma_\sigma(d))$ for $r=\cos(d)$ (we always assume this equality).
In this case, $\pi:\hat{\Sigma}_\sigma\rightarrow\Sigma_\sigma$ is a Riemannian submersion.
Moreover, since the fibers of this map are closed geodesics of $\S^{2n+1}$, the mean curvatures are related by $H_{\Sigma_\sigma}\circ\pi=H_{\hat{\Sigma}_\sigma}$.

The normal vector at $p=(x+y)\in \hat{\Sigma}_\sigma\subseteq\S^{2n+1}$ is given by 
\begin{equation}\label{eq normal vector of clifford torus in CPn}
    \hat{N}_p=\frac{\sqrt{1-r^2}}{r}x-\frac{r}{\sqrt{1-r^2}}y\in T_p\S^{2n+1},
\end{equation}
so its principal curvatures are $\frac{\sqrt{1-r^2}}{r}$ with multiplicity $1$ and $-\frac{r}{\sqrt{1-r^2}}$ with multiplicity $2n-1$. In particular, $\hat{\Sigma}_\sigma\subseteq\S^{2n+1}$ and $\Sigma_\sigma\subseteq\CP^n$ have constant mean curvature $H=\frac{1-2nr^2}{r\sqrt{1-r^2}}$.

Since $\Sigma_\sigma$ is a geodesic sphere centered at $\sigma\in\CP^n$, we give to $\Sigma_\sigma$ the orientation induced by the normal vector pointing to $\sigma$. 
Equivalently, this normal vector lifts horizontally to the vector defined in (\ref{eq normal vector of clifford torus in CPn}).

From now on, we will always work with $\cos(d)=r=\sqrt{\frac{1}{2n}}$.
This value makes $\Sigma_\sigma\subseteq(\CP^n,\overline{g})$ a minimal (real) hypersurfaces.

\begin{lem}\label{lem zoll family}
    The family $\mathcal{F}=\{\Sigma_\sigma\}_{\sigma\in\CP^n}$ determines an oriented Zoll family of minimal hypersurfaces in $(\CP^n,\overline{g})$.
\end{lem}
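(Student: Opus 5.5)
The plan is to show that the map $\Phi:\{(\sigma,x):x\in\Sigma_\sigma\}\to Gr^+_{2n-1}(\CP^n)$, $(\sigma,x)\mapsto(x,T_x\Sigma_\sigma)$ with the orientation given by the normal pointing to $\sigma$, is a diffeomorphism. Once this holds, the leaves $\hat\Sigma_\sigma=\Phi(\{\sigma\}\times\Sigma_\sigma)$ are the fibers of the smooth submersion $\Phi^{-1}$ followed by projection to $\sigma$, so they form a smooth foliation. Minimality of each $\Sigma_\sigma$ is already settled by the mean curvature formula $H=\frac{1-2nr^2}{r\sqrt{1-r^2}}$, which vanishes at $r=\sqrt{1/(2n)}$.

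\textbf{Step 1 (reformulation via unit normals).} An oriented hyperplane $(x,\pi)$ with $\pi\subseteq T_x\CP^n$ is the same as a unit normal $\nu\in T_x\CP^n$, so $Gr^+_{2n-1}(\CP^n)\cong U\CP^n$, the unit tangent bundle. Since $d<\pi/2=inj(\CP^n,\overline g)$, the geodesic sphere $\Sigma_\sigma$ is smooth, and its inward unit normal at $x$ is $\nu=-\dot\gamma(d)$. Here $\gamma$ is the unique minimizing unit-speed geodesic from $\sigma$ to $x$.

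\textbf{Step 2 (inverse map).} Conversely, given $(x,\nu)\in U\CP^n$, set $\sigma:=\exp_x(d\,\nu)$. Every geodesic of $(\CP^n,\overline g)$ of length $d<\pi/2$ is minimizing and is the unique minimizer, because the cut locus lies at distance $\pi/2$. Hence $dist(\sigma,x)=d$, so $x\in\Sigma_\sigma$, and the inward normal of $\Sigma_\sigma$ at $x$ is $\nu$. Any other $\sigma'$ with $x\in\Sigma_{\sigma'}$ and inward normal $\nu$ must satisfy $\sigma'=\exp_x(d\nu)$ by the same uniqueness. So through each oriented hyperplane passes exactly one member of the family, with matching orientation.

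The map $(x,\nu)\mapsto(\exp_x(d\nu),x)$ and its inverse $(\sigma,x)\mapsto(x,-\dot\gamma_{\sigma x}(d))$ are smooth. For the inverse this holds because $\exp_\sigma$ restricted to the ball of radius $d$ is a diffeomorphism, again since $d$ is below the injectivity radius. The incidence space $\{(\sigma,x):dist(\sigma,x)=d\}$ is a smooth sphere bundle over $\CP^n$, and the leaves are the images of its fibers. This gives the smooth foliation.

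If desired, Step 2 can be checked concretely on the lift to $\S^{2n+1}$: the horizontal lift of $\nu$ at $p=x+y$ is $\hat N_p$ from (\ref{eq normal vector of clifford torus in CPn}). One then recovers $\sigma=[x]$ as the complex line spanned by the component $r\,\frac{\sqrt{1-r^2}}{r}\cdot$ of $p+\frac{r\sqrt{1-r^2}}{1}\hat N_p$, which is a smooth function of $(p,\hat N_p)$ and is invariant under the $\S^1$ action.

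The main obstacle is the uniqueness and minimizing claim, namely that the geodesic from $x$ in direction $\nu$ reaches its endpoint at distance exactly $d$ with no competitor. I will handle it via the fact that $d<inj(\CP^n,\overline g)=\pi/2$.
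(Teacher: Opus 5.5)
Your proposal is correct and follows the same route as the paper: take the unit normal $\nu$ determined by the orientation, set $\sigma=\exp_x(d\nu)$, and use $d<\pi/2=inj(\CP^n,\overline{g})$ together with Gauss's lemma; you also spell out the uniqueness and the smoothness of the induced foliation, which the paper leaves implicit. One small slip is in your optional lift check, where the displayed expression is garbled: the correct identity is $\hat\gamma(d)=r\,p+\sqrt{1-r^2}\,\hat N_p=x/r$, so $\sigma=[\,r\,p+\sqrt{1-r^2}\,\hat N_p\,]$, whereas the coefficient $r\sqrt{1-r^2}$ you wrote leaves a nonzero $y$-component.
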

\begin{proof}

Given $(x,\pi)\in Gr^+_{n-1}(M)$, we need to show that there is a unique $\Sigma_\sigma$ with $T_x\Sigma_\sigma=\pi$ and the same orientation.
    
Consider an oriented hyperplane $\pi\subseteq T_x\CP^n$, and let $v\in T_x\CP^n$ orthogonal to $\pi$ with $\overline{g}(v,v)=1$. 
This vector is uniquely determined up to a sign, so we choose the one compatible with the orientations of $\pi$ and $T_x\CP^n$. 
In this case, let $\gamma_v(t)$ be the geodesic with $\gamma'(0)=v$ and consider $\sigma:=\gamma_v(d)\in\CP^n$ where $\cos(d)=r=\sqrt{\frac{1}{2n}}$. 
Observe that $x\in\Sigma_\sigma$ since 
$$d=\arccos\Big(\sqrt{\frac{1}{2n}}\Big)<\pi/2=inj(\CP^n,\overline{g}),$$ 
so the geodesic is minimizing, which proves that $x$ belongs to this geodesic sphere.
Finally, observe that $\pi=T_x\Sigma_\sigma$ by Gauss Lemma (radial geodesics are orthogonal to the geodesic spheres when the radius is smaller than the injectivity radius) and with the correct orientation.
\end{proof}


To prove Theorem \ref{Thm A CPn} for $\CP^n$, it is easier to consider $\Sigma_\sigma\subseteq\CP^n$ as a level set of a function.
Since there are many such functions, we consider one of the most natural choices; an eigenfunction of the Laplacian of $(\CP^n,\overline{g})$. 

For each $\sigma\in\CP^n$ and $x\in\C^{n+1}$, we denote  $$x=x_\sigma+x_{\sigma^\perp}\in\sigma\oplus\sigma^\perp=\C^{n+1},$$ 
the orthogonal decomposition of $x$ into $x_\sigma\in\sigma$ and $x_{\sigma^\perp}\in\sigma^\perp$. 
In this case, we denote by $Q_\sigma:\C^{n+1}\times\C^{n+1}\rightarrow\R$ the symmetric operator
$$Q_\sigma(X,Y)=n\inner{X_\sigma}{Y_\sigma}-\inner{X_{\sigma^\perp}}{Y_{\sigma^\perp}}.$$

Let $q_\sigma:\S^{2n+1}\rightarrow\R$ be the map $q_\sigma(x)=Q_\sigma(x,x)$. 
Since $q_\sigma$ is the restriction of a harmonic and homogeneous polynomial, $q_\sigma$ defines an eigenfunction of the Laplacian of $(\S^{2n+1},\overline{g})$.
Moreover, $q_\sigma$ is $\S^1$-invariant and thus induces a map in $\CP^n$ that we still denote by $q_\sigma$.
This induced map is an eigenfunction of the Laplacian of $(\CP^n,\overline{g})$ and $\Sigma_\sigma=q_{\sigma}^{-1}(c_n)$, where $c_n:=-\frac{n-1}{2n}$.
The next formulas will be used in the future without further mention.
\begin{lem}\label{lem dq and Hq}
    The derivative and the Hessian of $q_\sigma$ at $x\in\CP^n$ are given by 
    $$(dq_\sigma)_x(v)=2Q_\sigma(\hat{x},\hat{v}),$$
    $$\frac{1}{2}Hess_{\overline{g}}(q_\sigma)_x(v,w)=Q_\sigma(\hat{v},\hat{w})-q_\sigma(x)\overline{g}(v,w),$$
    where $\hat{x}\in\S^{2n+1}$ is any representative of $x\in\CP^n$, and $\hat{v},\hat{w}\in T_{\hat{x}}\S^{2n+1}$ are the horizontal lifts of $v,w\in T_x\CP^n$.
\end{lem}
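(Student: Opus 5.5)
The plan is to work upstairs on $\S^{2n+1}$ through the Riemannian submersion $\pi$, and to compare $q_\sigma$ with the function $\hat q_\sigma:=q_\sigma\circ\pi=Q_\sigma(\cdot,\cdot)|_{\S^{2n+1}}$. The derivative formula follows from the chain rule. For $v\in T_x\CP^n$ with horizontal lift $\hat v\in T_{\hat x}\S^{2n+1}$ we have $d\pi(\hat v)=v$, so
$$(dq_\sigma)_x(v)=(d\hat q_\sigma)_{\hat x}(\hat v)=\frac{d}{dt}\Big|_0 Q_\sigma(\hat x+t\hat v,\hat x+t\hat v)=2Q_\sigma(\hat x,\hat v).$$
Here we use that $\hat q_\sigma$ is the restriction of the quadratic form $Q_\sigma$ on $\C^{n+1}=\R^{2n+2}$. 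The result is independent of the representative $\hat x$, because $Q_\sigma$ is invariant under multiplication by unit complex scalars: both $\sigma$ and $\sigma^\perp$ are complex subspaces, so the projections commute with multiplication by $e^{i\theta}$. Also, horizontal lifts at $e^{i\theta}\hat x$ are $e^{i\theta}\hat v$.

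For the Hessian, I first compute the Hessian of $\hat q_\sigma$ on the round sphere. For the restriction of an ambient function $F$ to $\S^{2n+1}\subseteq\R^{2n+2}$ we have
$$Hess_{\S}(\hat q)(X,Y)=D^2F(X,Y)-\inner{\nabla F}{\hat x}\inner{X}{Y},$$
since the second fundamental form of the unit sphere is $-\inner{X}{Y}\hat x$. With $F(z)=Q_\sigma(z,z)$ we get $D^2F=2Q_\sigma$ and $\inner{\nabla F}{\hat x}=2Q_\sigma(\hat x,\hat x)=2q_\sigma(x)$. Hence
$$\tfrac12 Hess_{\S}(\hat q)(X,Y)=Q_\sigma(X,Y)-q_\sigma(x)\inner{X}{Y}.$$

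It remains to transfer this to $\CP^n$. For a Riemannian submersion and a basic function $\hat q=q\circ\pi$, if $\hat v,\hat w$ are horizontal lifts, then $\nabla^{\S}_{\hat v}\hat w=\widehat{\overline\nabla_v w}+\frac12[\hat v,\hat w]^{\mathcal V}$ by O'Neill's formula. This gives
$$Hess_{\S}\hat q(\hat v,\hat w)=Hess_{\overline g}q(v,w)-\tfrac12\, d\hat q\big([\hat v,\hat w]^{\mathcal V}\big).$$
The last term vanishes because $\hat q$ is constant along the fibers, so $d\hat q$ kills vertical vectors. Since $\inner{\hat v}{\hat w}=\overline g(v,w)$, the stated Hessian formula follows.

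The only step needing care is this O'Neill comparison, specifically that the vertical part of the covariant derivative contributes nothing. Basicness of $\hat q$ handles it immediately. The rest is routine.
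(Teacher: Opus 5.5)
Your proposal is correct and is essentially the paper's argument. The paper also obtains $dq_\sigma$ by differentiating along a horizontal curve, and for the Hessian it combines the same three ingredients: O'Neill's formula for the round-sphere derivative $\overline{\nabla}_{\hat V}\hat W$ of horizontal lifts, the normal component $-\overline{g}(V,W)\hat{x}$ of the ambient derivative on the unit sphere, and the vanishing of the vertical term via $Q_\sigma(\hat{x},i\hat{x})=0$, which is exactly your observation that $d\hat q_\sigma$ kills vertical vectors. The only difference is organizational: you first compute the round-sphere Hessian of $\hat q_\sigma$ and then transfer it through the submersion, while the paper does both steps in one chain of equalities.
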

\begin{proof}
    The first formula follows directly by differentiating $q_\sigma(\hat{\gamma}_{\hat{v}}(t))$, where $\hat{\gamma}_{\hat{v}}:\R\rightarrow\S^{2n+1}$ is a horizontal geodesic with $\hat{\gamma}_{\hat{v}}'(0)=\hat{v}$. 
    For the second identity, consider $V,W$ local vector fields extending $v,w\in T_x\CP^n$, and let $\hat{V},\hat{W}$ be their horizontal lifts to the sphere.
    Also, denote by $\widehat{\overline{\nabla}_VW}$ the horizontal lift of $\overline{\nabla}_{V}W$ to $\S^{2n+1}$.
    It is well known that
    $$\overline{\nabla}_{\hat{V}}\hat{W}=\widehat{\overline{\nabla}_VW}+\overline{g}(X,JY)R,$$
    where, on the left-hand side, we are using the connection of the round sphere $(\S^{2n+1},\overline{g})$, $J:T\CP^n\rightarrow T\CP^n$ is the complex structure, and $R_p:=ip\in T_p\S^{2n+1}$ is the Reeb vector field of $\S^{2n+1}$. The Hessian is then given by
    \begin{align*}
        \frac{1}{2}Hess_{\overline{g}}q_\sigma(v,w)&=\frac{1}{2}VW(q_\sigma)-\frac{1}{2}\overline{\nabla}_VW(q_\sigma)=\frac{1}{2}V(dq_\sigma(W))-\frac{1}{2}dq_\sigma(\overline{\nabla}_VW)\\
        &=\hat{V}(Q_{\sigma}(\hat{x},\hat{W}))-Q_{\sigma}(\hat{x},\widehat{\overline{\nabla}_{V}W})\\
        &=Q_\sigma(\hat{V},\hat{W})+Q_\sigma(\hat{x},\overline{\nabla}_{\hat{V}}\hat{W}-\inner{V}{W}\hat{x})-Q_{\sigma}(\hat{x},\widehat{\overline{\nabla}_{V}W})\\
        &=Q_\sigma(\hat{v},\hat{w})-q_\sigma(x)\overline{g}(v,w),
    \end{align*}
    where in the last step we used that $Q_\sigma(\hat{x},i\hat{x})=0$ for any $x$.
\end{proof}

Consider now an arbitrary metric $g$ in $\CP^n$, we will study the condition for $\Sigma_\sigma\subseteq(\CP^n,g)$ being minimal for each $\sigma\in\CP^n$.
For $x\in\Sigma_\sigma$, let 
$$N_\sigma(x):=\frac{\nabla^gq_\sigma(x)}{|\nabla^g q_\sigma(x)|_g}\in T_x\CP^n,$$
the unit normal vector (with respect to $g$) of $\Sigma_\sigma=q_\sigma^{-1}(c_n)$ at $x$.
To simplify the notation, we will always omit the point $x$.

The second fundamental form of $\Sigma_\sigma\subseteq(\CP^n,g)$ is given by
$$A_g(X,Y)=g(\nabla^g_X N_\sigma,Y)=\frac{1}{|\nabla^gq_\sigma|_g}Hess_{g}q_\sigma(X,Y).$$
In this case, the mean curvature of $\Sigma_\sigma\subseteq(\CP^n,g)$ is given by
\begin{equation}\label{formula of H in terms of q}
    H=\frac{1}{|\nabla^gq_\sigma|_g}\Big(\Delta_gq_\sigma-Hess_{g}q_\sigma(N_\sigma,N_\sigma)\Big).
\end{equation}

On the other hand, if we consider the $3$-tensor $T$ given by
$$T(X,Y,Z)=g(\nabla^g_XY-\overline{\nabla}_XY,Z),\quad X,Y,Z\in T\CP^n,$$
where $\nabla^g$ is the Levi-Civita connection of $g$, then
\begin{align*}
    Hess_gq_\sigma(X,Y)&=XY(q_\sigma)-(\nabla^g_XY)q_\sigma\\
    &=Hess_{\overline{g}}q_\sigma(X,Y)+(\overline{\nabla}_XY-\nabla^g_XY)q_\sigma\\
    &=Hess_{\overline{g}}q_\sigma(X,Y)-T(X,Y,\nabla^gq_\sigma),
\end{align*}
and so
$$\Delta_gq_\sigma=tr_g(Hess_{\overline{g}}q_\sigma)-tr^{12}_g(T)(\nabla^gq_\sigma).$$
Combining these identities with (\ref{formula of H in terms of q}), the minimal equation $\Sigma_\sigma\subseteq(\CP^n,g)$ at $x\in\Sigma_\sigma$ is equivalent to
\begin{equation}\label{H=0 case CPn}
    tr^{12}_g(T)(\nabla^gq_\sigma)-T(N_\sigma,N_\sigma,\nabla^gq_\sigma)=tr_g(Hess_{\overline{g}}q_\sigma)-Hess_{\overline{g}}q_\sigma(N_\sigma,N_\sigma),
\end{equation}
where $tr^{12}_g$ indicates the trace of the first two entries with respect to $g$.

We can compare (\ref{H=0 case CPn}) with the spherical case studied in \cite{AmbrozioMinimalEquators} and \cite{AMN}.
In their situation, the right-hand side of (\ref{H=0 case CPn}) is trivially zero due to the linear condition that determines the equators of $\S^n$.
However, $\Sigma_\sigma\subseteq\CP^n$ is determined by a quadratic equation, which imposes additional terms to the minimal equation.

Equation (\ref{H=0 case CPn}) is given at each $(x,\sigma)$ with $x\in\Sigma_\sigma$, so the strategy to simplify this equation will be to fix $x$ and vary $\sigma$.

\begin{lem}\label{lemma tilde-sigma}
    For each $x\in\Sigma_\sigma$, there exists $\tilde{\sigma}=\tilde{\sigma}(x,\sigma)\in\CP^n$ such that $x\in\Sigma_{\tilde{\sigma}}$, $(\nabla^gq_{\tilde{\sigma}})_x=-(\nabla^gq_\sigma)_x\in T_x\CP^n$, and $Hess_{\overline{g}}(q_{\tilde{\sigma}})_x=Hess_{\overline{g}}(q_\sigma)_x$.
\end{lem}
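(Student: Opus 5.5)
The plan is to take $\tilde{\sigma}$ to be the image of $\sigma$ under a unitary reflection of $\C^{n+1}$ that fixes the line $x$. Then I compare $Q_{\tilde\sigma}$ with $Q_\sigma$ through the formulas of Lemma \ref{lem dq and Hq}.

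Fix a unit representative $\hat{x}\in\S^{2n+1}$ of $x$. Let $\hat{x}^{\perp_\C}$ denote the complex orthogonal complement of $\hat{x}$; this is exactly the horizontal space at $\hat{x}$. Define the unitary map
$$R:=2P_x-I:\C^{n+1}\rightarrow\C^{n+1},$$
where $P_x$ is the Hermitian orthogonal projection onto the complex line $x$. Thus $R\hat{x}=\hat{x}$ and $R=-I$ on $\hat{x}^{\perp_\C}$. Set $\tilde{\sigma}:=R(\sigma)\in\CP^n$.

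Since $R$ is unitary, it maps the decomposition $\sigma\oplus\sigma^\perp$ onto $\tilde\sigma\oplus\tilde\sigma^\perp$. It also preserves the real inner product $\inner{\cdot}{\cdot}=\mathrm{Re}\langle\cdot,\cdot\rangle_\C$. From the definition of $Q_\sigma$ this gives the naturality identity
$$Q_{\tilde\sigma}(X,Y)=Q_\sigma(R^{-1}X,R^{-1}Y)=Q_\sigma(RX,RY),$$
where the last equality uses $R^{-1}=R$.

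Now I apply this identity to the three quantities in the statement.

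First, $q_{\tilde\sigma}(x)=Q_\sigma(R\hat{x},R\hat{x})=q_\sigma(x)=c_n$. Hence $x\in\Sigma_{\tilde\sigma}=q_{\tilde\sigma}^{-1}(c_n)$.

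Second, for $v\in T_x\CP^n$ with horizontal lift $\hat{v}\in\hat{x}^{\perp_\C}$, Lemma \ref{lem dq and Hq} gives
$$(dq_{\tilde\sigma})_x(v)=2Q_\sigma(R\hat{x},R\hat{v})=2Q_\sigma(\hat{x},-\hat{v})=-(dq_\sigma)_x(v).$$
So $dq_{\tilde\sigma}=-dq_\sigma$ at $x$. Raising the index with the fixed metric $g$ gives $(\nabla^gq_{\tilde\sigma})_x=-(\nabla^gq_\sigma)_x$.

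Third, for horizontal lifts $\hat{v},\hat{w}$ we have
$$Q_{\tilde\sigma}(\hat{v},\hat{w})=Q_\sigma(-\hat{v},-\hat{w})=Q_\sigma(\hat{v},\hat{w}).$$
Together with $q_{\tilde\sigma}(x)=q_\sigma(x)$, the Hessian formula of Lemma \ref{lem dq and Hq} yields $Hess_{\overline{g}}(q_{\tilde\sigma})_x=Hess_{\overline{g}}(q_\sigma)_x$.

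The only point needing care is that every formula is independent of the choice of representative $\hat{x}$ and of the lifts. This is guaranteed by Lemma \ref{lem dq and Hq} and by the $\S^1$-invariance of $q_\sigma$. It is also harmless that $R$ is defined using the specific $\hat{x}$, since $P_x$ depends only on the line $x$.

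As a sanity check in coordinates, write a unit generator of $\sigma$ as $u=\alpha\hat{x}+\beta e$ with $e\in\hat{x}^{\perp_\C}$ a unit vector. Then $\tilde{\sigma}$ is spanned by $\alpha\hat{x}-\beta e$: the $\hat{x}$-component is kept and the horizontal component is flipped. This is the expected "reflection of $\sigma$ across $x$".

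I do not expect a serious obstacle here. The main step is the naturality identity $Q_{R\sigma}=R^*Q_\sigma$ combined with the fact that $R$ acts as $-1$ exactly on the horizontal space. Once that is in place, the sign of $dq$ flips while the quadratic Hessian term is unchanged.
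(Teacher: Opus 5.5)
Your proof is correct. Your $\tilde\sigma$ is the same point as the paper's: since $R(\cos d\,\hat x+\sin d\,\hat v)=\cos d\,\hat x-\sin d\,\hat v$, the map $R$ induces the geodesic symmetry of $(\CP^n,\overline g)$ at $x$, so $\tilde\sigma=R(\sigma)=\gamma_v(-d)$.

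The routes differ in how the two identities are verified. The paper writes $\sigma=\gamma_v(d)$ for the unique unit $v\in T_x\CP^n$ and computes explicitly. It shows that $\tfrac12 dq_\sigma(w)=(n+1)\cos d\sin d\,\inner{\hat w}{\hat v}$ is odd in $v$, while $Hess_{\overline g}(q_\sigma)_x$ depends only on the complex line $V=\C\hat v$ (formula (\ref{eq Hess(q sigma)=H_V})). You instead use the equivariance $Q_{R\sigma}=R^*Q_\sigma$ together with the fact that $R=-1$ on the horizontal space, so the linear term flips sign and the quadratic term does not.

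Your argument is shorter and coordinate-free. It uses nothing about the specific radius $d$, the level $c_n$, or the injectivity radius: membership $x\in\Sigma_{\tilde\sigma}$ comes directly from $q_{\tilde\sigma}(x)=q_\sigma(x)$. It also carries over verbatim to $\HP^n$ by taking $P_x$ to be the projection onto the quaternionic line $\hat x\HH$, so that $R\in Sp(n+1)$; this is exactly the ``straightforward adjustment'' the paper leaves to the reader.

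What the paper's computation buys is the explicit formula (\ref{eq Hess(q sigma)=H_V}) for the Hessian in terms of $V$. That formula is reused in the proof of Theorem \ref{Thm A CPn} to define $H_V$ and run the trace argument, so with your approach it would still have to be derived there.
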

\begin{proof}
    As $x\in\Sigma_\sigma$ we have that $dist_{\overline{g}}(x,\sigma)=d$, where $d=\arccos(\sqrt{\frac{1}{2n}})<\pi/2=inj(\CP^n,\overline{g})$.
    Hence, there exists a unique $v\in T_x\CP^n$ such that $\gamma_v(t)=\exp_x(tv)$ is a minimizing geodesic for $t<\pi/2$ and $\gamma_v(d)=\sigma$. 
    The association $v\rightarrow\sigma(v)$ defines a bijection between the set of unit vectors of $T_x\CP^n$ with the elements $\sigma\in\CP^n$ such that $x\in\Sigma_\sigma$.
    
    Let $\tilde{\sigma}:=\gamma_v(-d)\in\CP^n$, that is, $\tilde{\sigma}=\sigma(-v)$.
    Observe that $dist_{\overline{g}}(x,\tilde{\sigma})=d$ since the injectivity radius of $x\in(\CP^n,\overline{g})$ is $\pi/2$, so $x\in\Sigma_{\tilde{\sigma}}$.
    We claim that $\tilde{\sigma}$ satisfies the conditions. 
    To do so, we will express $(dq_\sigma)_x$ and $Hess_{\overline{g}}(q_\sigma)_x$ in terms of $v$.

    Let $\hat{x}\in\S^{2n+1}$ be a representative of $x\in\CP^n$ and consider $\hat{v}\in T_{\hat{x}}\S^{2n+1}$ the horizontal lift of $v\in T_x\CP^n$. In this case, $\hat{\gamma}_v(t)=\cos(t)\hat{x}+\sin(t)\hat{v}$ is the horizontal lift of the geodesic $\gamma_v(t)$. 
    For any $y\in\C^{n+1}$ we have
    $$y_\sigma=\inner{y}{\hat{\gamma}_v(d)}\hat{\gamma}_{v}(d)+\inner{y}{i\hat{\gamma}_v(d)}i\hat{\gamma}_{v}(d)$$
    so, at $x\in\CP^n$, we have
    \begin{align*}
        \frac{1}{2}dq_\sigma(w)&=Q_\sigma(\hat{x},\hat{w})=n\inner{\hat{x}_\sigma}{\hat{w}_\sigma}-\inner{\hat{x}_{\sigma^\perp}}{\hat{w}_{\sigma^\perp}}\\
        &=(n+1)\inner{\hat{x}_\sigma}{\hat{w}_\sigma}\\
        &=(n+1)(\inner{\hat{x}}{\hat{\gamma}_v(d)}\inner{\hat{w}}{\hat{\gamma}_v(d)}+\inner{\hat{x}}{i\hat{\gamma}_v(d)}\inner{\hat{w}}{i\hat{\gamma}_v(d)})
    \end{align*}
    so
    \begin{equation}\label{eq dq-sigma wrt v}
        \frac{1}{2}dq_\sigma(w)=(n+1)\cos(d)\sin(d)\inner{\hat{w}}{\hat{v}},
    \end{equation}
    and by an analogous computation at $x\in\CP^n$ we get
    $$dq_{\tilde{\sigma}}(w)=(n+1)\cos(d)\sin(d)\inner{\hat{w}}{-\hat{v}}=-dq_\sigma(w),$$
    so $\nabla^gq_\sigma(x)=-\nabla^gq_{\tilde{\sigma}}(x)$.

    Similar computations show the equality of the Hessians at $x$. 
    In fact, given $u,w\in T_x\CP^n$ we have
    \begin{align*}
        \frac{1}{2}Hess_{\overline{g}}q_\sigma(u,w)&=Q_\sigma(\hat{u},\hat{w})-c_n\overline{g}(u,w)\\
        &=(n+1)\inner{\hat{u}_\sigma}{\hat{w}_\sigma}-(c_n+1)\overline{g}(u,w)\\
        =(&n+1)\sin(d)^2(\inner{\hat{u}}{\hat{v}}\inner{\hat{w}}{\hat{v}}+\inner{\hat{u}}{i\hat{v}}\inner{\hat{w}}{i\hat{v}})-(c_n+1)\overline{g}(u,w),
    \end{align*}
    so using $\sin(d)^2=\frac{2n-1}{2n}$ and $c_n=-\frac{n-1}{2n}$ we get
    \begin{equation}\label{eq Hess(q sigma)=H_V}
        \frac{1}{2}Hess_{\overline{g}}q_\sigma(u,w)=\frac{(n+1)}{2n}\Big((2n-1)\inner{\hat{u}_V}{\hat{w}_V}-\overline{g}(u,w)\Big),
    \end{equation}
    where $\hat{u}_V,\hat{w}_V$ denotes the orthogonal projections of $\hat{v}, \hat{w}$ on the $\C$-line given by $V=\C \hat{v}\subseteq\C^{n+1}$. 
    In particular, $Hess_{\overline{g}}(q_\sigma)_x$ depends only on $V$, so $Hess_{\overline{g}}(q_\sigma)_x=Hess_{\overline{g}}(q_{\tilde{\sigma}})_x$ since $v$ and $-v$ determine the same $\C$-line.
\end{proof}

As a direct consequence of the last result, we can simplify equation (\ref{H=0 case CPn}).

\begin{cor}\label{Cor H=0 in two equations}
    Using notations of Lemma \ref{lemma tilde-sigma}, $\Sigma_\sigma,\Sigma_{\tilde{\sigma}}\subseteq\CP^n$ are minimal at $x$ if and only if the following equations hold
    \begin{equation}\label{eq tr(T)(X)=T(X,X,X) CPn}
        tr^{12}_g(T)(\nabla^gq_\sigma)-T(N_\sigma,N_\sigma,\nabla^gq_\sigma)=0,
    \end{equation}
    \begin{equation}\label{eq tr(H)=H(N.N) CPn}
        tr_g(Hess_{\overline{g}}q_\sigma)-Hess_{\overline{g}}q_\sigma(N_\sigma,N_\sigma)=0.
    \end{equation}
\end{cor}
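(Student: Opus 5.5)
The plan is to write the minimal equation (\ref{H=0 case CPn}) at the point $x$ twice, once for $\sigma$ and once for $\tilde{\sigma}$, and then add and subtract the two equations. Corollary \ref{Cor H=0 in two equations} is exactly the statement that this pair of equations is equivalent to (\ref{eq tr(T)(X)=T(X,X,X) CPn}) together with (\ref{eq tr(H)=H(N.N) CPn}).

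First I record how each term changes when $\sigma$ is replaced by $\tilde{\sigma}$, using Lemma \ref{lemma tilde-sigma}.

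\begin{itemize}
\item We have $\nabla^gq_{\tilde{\sigma}}=-\nabla^gq_\sigma$ at $x$. Therefore $|\nabla^gq_{\tilde{\sigma}}|_g=|\nabla^gq_\sigma|_g$ and $N_{\tilde{\sigma}}=-N_\sigma$.
\item Since $T$ is trilinear, and hence linear in its third slot, the left-hand side of (\ref{H=0 case CPn}) changes sign:
$$tr^{12}_g(T)(\nabla^gq_{\tilde{\sigma}})-T(N_{\tilde{\sigma}},N_{\tilde{\sigma}},\nabla^gq_{\tilde{\sigma}})=-\Big(tr^{12}_g(T)(\nabla^gq_\sigma)-T(N_\sigma,N_\sigma,\nabla^gq_\sigma)\Big).$$
The first two slots are quadratic in $N$, so the sign of $N$ does not matter there.
\item The right-hand side of (\ref{H=0 case CPn}) is unchanged. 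Indeed, $Hess_{\overline{g}}(q_{\tilde{\sigma}})_x=Hess_{\overline{g}}(q_\sigma)_x$, and $Hess_{\overline{g}}q(N,N)$ is quadratic in $N$, so it is the same for $N_{\tilde{\sigma}}=-N_\sigma$.
\end{itemize}

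Let $L$ denote the left-hand side and $R$ the right-hand side of (\ref{H=0 case CPn}) for $\sigma$. By (\ref{formula of H in terms of q}) and the derivation of (\ref{H=0 case CPn}), minimality of $\Sigma_\sigma$ at $x$ is equivalent to $L=R$, since the factor $1/|\nabla^gq_\sigma|_g$ is nonzero. By the computation above, minimality of $\Sigma_{\tilde{\sigma}}$ at $x$ is equivalent to $-L=R$.

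To see that the factor is indeed nonzero, I use (\ref{eq dq-sigma wrt v}): since $\sin d\cos d\neq0$, we get $dq_\sigma\neq0$ on $\Sigma_\sigma$. Hence $\Sigma_\sigma=q_\sigma^{-1}(c_n)$ is a regular level set and $N_\sigma$ is well defined. This is also needed for $\Sigma_{\tilde{\sigma}}$.

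Both minimality conditions therefore hold if and only if $L=R$ and $-L=R$. This is equivalent to $L=0$ and $R=0$, which are precisely (\ref{eq tr(T)(X)=T(X,X,X) CPn}) and (\ref{eq tr(H)=H(N.N) CPn}). The converse direction is immediate, since $L=R=0$ gives both equations.

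There is no real obstacle here. The only point requiring care is the parity bookkeeping: one must note that $N_\sigma$ appears quadratically everywhere, while $\nabla^gq_\sigma$ appears linearly on the left only. This is what makes the left-hand side odd and the right-hand side even under $\sigma\mapsto\tilde{\sigma}$.
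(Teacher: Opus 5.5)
Your proposal is correct and follows the paper's own argument. You write the minimal equation (\ref{H=0 case CPn}) at $x$ for both $\sigma$ and $\tilde{\sigma}$, use Lemma \ref{lemma tilde-sigma} to see that the left-hand side changes sign while the right-hand side is unchanged, and add and subtract the two equations. Your extra check that $dq_\sigma\neq0$ on $\Sigma_\sigma$, so that $N_\sigma$ and the formula for $H$ are well defined, is a detail the paper leaves implicit.
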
 
\begin{proof}
    The minimal equation of $\Sigma_\sigma\subseteq\CP^n$ at $x\in\Sigma_\sigma$ is equivalent to (\ref{H=0 case CPn}).
    Using this equation for $x\in\Sigma_{\tilde{\sigma}}$ gives
    $$-tr^{12}_g(T)(\nabla^gq_\sigma)+T(N_\sigma,N_\sigma,\nabla^gq_\sigma)=tr_g(Hess_{\overline{g}}q_\sigma)-Hess_{\overline{g}}q_\sigma(N_\sigma,N_\sigma),$$
    and combining this with (\ref{H=0 case CPn}) gives the desired equations.
\end{proof}

We can now prove Theorem \ref{Thm A CPn} for $\CP^n$.

\begin{proof}[Proof of Theorem \ref{Thm A CPn} case $\CP^n$]
    Assume that $\Sigma_\sigma\subseteq\CP^n$ is minimal for any $\sigma\in\CP^n$.
    As we discussed before, the minimal equation $\Sigma_\sigma\subseteq\CP^n$ at $x\in\Sigma_\sigma$ is equivalent to (\ref{H=0 case CPn}). Moreover, by Corollary \ref{Cor H=0 in two equations}, we know that (\ref{eq tr(T)(X)=T(X,X,X) CPn}) and (\ref{eq tr(H)=H(N.N) CPn}) hold for any $(x,\sigma)$ wit $x\in\Sigma_\sigma$.
    
    Consider the positive function $F=F_g:\CP^n\rightarrow\R$ determined by 
    $$dV_g=F^{(2n+1)/4}dV_{\overline{g}},$$
    where $dV_g$ and $dV_{\overline{g}}$ are the respective volume forms of $g$ and $\overline{g}$. 
    Following the arguments of \cite{AMN} we can prove that equation (\ref{eq tr(T)(X)=T(X,X,X) CPn}) is equivalent to $k:=F^{-1}g$ being a Killing $2$-tensor (see Lemma \ref{lemma T(X,X,X)=g(X,X)tr(T)(X) then k Killing} of the Appendix).
    That is, $k$ satisfies 
    $$(\overline{\nabla}k)(X,X,X)=0,\quad\forall X\in T\CP^n.$$

    As observed in the proof of Lemma \ref{lemma tilde-sigma}, if $x\in\Sigma_\sigma$ there is a unique unit vector $v\in T_x\CP^n$ that determines $\sigma=\sigma(v)$ (we gave the bijection $v\rightarrow\gamma_v(d)=\sigma(v)$).
    With this bijection, (\ref{eq Hess(q sigma)=H_V}) holds for the complex line $V=\C\hat{v}$.
    To simplify notation, let 
    $H_V:T_x\CP^n\times T_x\CP^n\rightarrow\R$ the tensor 
    $$H_V(u,w)=(2n-1)\inner{\hat{u}_V}{\hat{w}_V}-\overline{g}(u,w).$$
    In this case, (\ref{eq tr(H)=H(N.N) CPn}) is equivalent to 
    \begin{equation}\label{eq tr(HV)=HV(N,N)}
        tr_k(H_{V})=H_V(N_v,N_v),
    \end{equation}
    where $N_v:=F^{1/2}N_\sigma$ is the unit normal of $\Sigma_\sigma$ with respect to the metric $k$.
    Observe that $T_x\Sigma_\sigma$ has a unique maximal complex subspace of real dimension $2n-2$, namely, $W_v:=T_x\Sigma_\sigma\cap J(T_x\Sigma_\sigma)$. Alternatively, $W_v$ correspond to $(\text{span}_\R\{\hat{x},i\hat{x},\hat{v},i\hat{v}\})^\perp\subseteq T_{\hat{x}}\S^{2n+1}$ under horizontal identification.
    In particular, there is a unique (up to sign) $N^*_v\in T_x\Sigma_\sigma$ with $k(N^*_v,N^*_v)=1$ and that is also $k$-orthogonal to $W_v$.
    Observe that $\hat{w}_V=0$ for any $w\in W_v$ (where $\hat{w}$ is the horizontal lift of $w$).
    Hence, consider a $k$-orthonormal basis in which the first elements are $\{N_v,N_v^*\}$ and the remaining belong to $W_v$.
    Using this basis, (\ref{eq tr(HV)=HV(N,N)}) is equivalent to
    \begin{equation}\label{eq tr(g)=}
        tr_k(\overline{g})=(2n-1)\inner{(\hat{N}^*_v)_V}{(\hat{N}^*_v)_V}+\inner{\hat{N}_v}{\hat{N}_v}.
    \end{equation}
    
    Observe that the association $v\rightarrow N^*_v$ can be done continuously, since $v\rightarrow N_v$ is continuous and we can use the orientation of the complex space $W_v$ to determine uniquely $N^*_v$.
    Moreover, the map  $v\rightarrow N^*_v$ is a bijection between the vectors of unit $\overline{g}$-length with the vectors of unit $k$-legth. 
    Hence, choose $v$ such that $\inner{\hat{N}^*_{v}}{\hat{N}^*_{v}}\leq\inner{\hat{N}^*_w}{\hat{N}^*_w}$ for any $w$.
    Then by (\ref{eq tr(g)=}) we get
    $$(2n-1)\inner{\hat{N}^*_v}{\hat{N}^*_v}+\inner{\hat{N}_v}{\hat{N}_v}\leq tr_k(\overline{g})=(2n-1)\inner{(\hat{N}^*_v)_V}{(\hat{N}^*_v)_V}+\inner{\hat{N}_v}{\hat{N}_v},$$
    so $(\hat{N}^*_v)_V=\hat{N}^*_v$ and $\inner{u}{u}=\inner{N^*_v}{N^*_v}=:\lambda$ for any $u\in W_v$ with $k(u,u)=1$.
    By polarization, we have that $\overline{g}|_{T_x\Sigma_{\sigma(v)}}=\lambda k|_{T_x\Sigma_{\sigma(v)}}$.
    
    Summarizing, we have shown that for any $v$ with $\lambda=\inner{N_v^*}{N_v^*}$ minimal the metrics $\overline{g}$ and $\lambda k$ coincide in $T_x\Sigma_{\sigma(v)}$ (up to scaling by $\lambda$). However, we showed that there are infinite elements that attain this minimal value, so the metrics coincide on an infinite number of distinct hyperplanes. This implies that the metrics coincide in $T_x\CP^n$ (up to scaling by $\lambda=\lambda(x)$).
    
    Hence, $\overline{g}=\lambda k$ for some positive function $\lambda:\CP^n\rightarrow\R$. However, $\overline{g}$ and $k$ are Killing $2$-tensors, so the Killing equation for both give
    \begin{align*}
        0&=\overline{\nabla}_X\overline{g}(X,X)=X(\lambda k(X,X))-2\lambda k(\overline{\nabla}_XX,X)\\
        &=d\lambda(X)k(X,X)+\lambda\overline{\nabla}_X k(X,X)=d\lambda(X)k(X,X),\quad\forall X\in\CP^n,
    \end{align*}
    which proves that $\lambda$ is constant.
    However, in this case the volume form of $g=Fk=F\lambda^{-1}\overline{g}$ is $dV_g=F^{2n}\lambda^{-2n}dV_{\overline{g}}$, so $F^{(2n+1)/4}=F^{2n}\lambda^{-2n}$ which shows that $F$ is constant and this concludes the proof.
\end{proof}

\section{The Quaternionic projective space case}\label{section Quaternionic projective space}

In this section, we prove Theorem \ref{Thm A CPn} for $\HP^n$. The sketch of the proof is similar to the case $\CP^n$, so we will skip analogous steps. 
Nevertheless, we highlight the necessary modifications.
However, the final part of the proof is more complicated than for $\CP^n$.
For this reason, we separate the final algebraic argument in Lemma \ref{lemma tr(Hv)=Hv(Nv,Nv) for HPn} of the Appendix.

\text{ }

We identify any element $\sigma\in\HP^n$ with the respective $4$-dimensional vector space of $\mathbb{H}^{n+1}:=\R^{4n+4}$ (the associated quaternionic line).
We use the convention that quaternionic lines of $\HH^{n+1}$ are closed by scalar multiplication of $\HH$ on the right.
In this case, the Hopf fibration $\map{\pi}{\S}{4n+3}{\HP^n}$ is the quotient of $\S^{4n+3}$ by the right-action of $Sp(1)=\S^3\subseteq\HH$.

For $\sigma\in\HP^n$ and $d\in(0,\pi/2)$, consider the geodesic ball
$$\Sigma_\sigma=\Sigma_\sigma(d):=\{x\in\HP^n:dist_{\overline{g}}(x,\sigma)=d\}\subseteq\CP^n,$$
with the orientation given by the normal vector that points towards $\sigma$.
Using the Hopf fibration $\map{\pi}{\S}{4n+3}{\HP^n}$, we can lift $\Sigma_\sigma$ to $\S^{4n+3}$ and obtain a torus.
More precisely, if
$$\hat{\Sigma}_\sigma=\hat{\Sigma}_\sigma(r):=\{(x+y)\in\S^{4n+3}:x\in\sigma,y\in\sigma^{\perp},|x|^2=r^2=1-|y|^2\}\subseteq\S^{4n+3}\hspace{-1mm},$$
then $\hat{\Sigma}_\sigma\cong\S^3(r)\times\S^{4n-1}(\sqrt{1-r^2})$ and $\hat{\Sigma}_\sigma(r)=\pi^{-1}(\Sigma_\sigma(d))$ for $r=\cos(d)$.

We can compute the mean curvature in the same way as we did for $\CP^n$.
Namely, $H_{\Sigma_\sigma}\circ\pi=H_{\hat{\Sigma}_\sigma}=\frac{3-r^2(4n+2)}{r\sqrt{1-r^2}}$, so
$\Sigma_\sigma\subseteq(\HP^n,\overline{g})$ is minimal for $\cos(d)=r=\sqrt{\frac{3}{4n+2}}$.
We always assume this equality.

We define $Q_\sigma:\mathbb{H}^{n+1}\times\mathbb{H}^{n+1}\rightarrow \R$ as the bilinear map 
$$Q_\sigma(X,Y)=n\inner{X_\sigma}{Y_\sigma}-\inner{X_{\sigma^\perp}}{Y_{\sigma^\perp}},$$
where $X_\sigma$ and $X_{\sigma^\perp}$ denote the orthogonal projections of $X$ into $\sigma, \sigma^\perp\subseteq\mathbb{H}^{n+1}$, respectively.
In this case, let $q_\sigma:\S^{4n+3}\rightarrow\R$ be the map given by $q_\sigma(x)=Q_\sigma(x,x)$. 
As $q_\sigma$ is invariant by the Hopf fibration, we think of $q_\sigma$ as a map of $\HP^n$.
In this case, $q_\sigma$ is harmonic in $(\HP^n,\overline{g})$ and $\Sigma_\sigma=q_\sigma^{-1}(c_n)$ where $c_n=-\frac{n-1}{4n+2}$.

The formulas of Lemma \ref{lem dq and Hq} translate to our context, but for the maps $q_\sigma$ and $Q_\sigma$ defined in this section. 
We highlight that, for $\CP^n$, we used the complex structure $J$ and the vertical vector field $R_{\hat{x}}=i\hat{x}$ that satisfy $Q_\sigma(\hat{x},i\hat{x})=0$. 
In $\HP^n$ we have three complex structures $I,J$, and $K$ that satisfy the quaternionic relations, moreover, they determine three horizontal vectors in $\S^{4n+3}$, $R^I_{\hat{x}}:=\hat{x}\cdot i$, $R^J_{\hat{x}}:=\hat{x}\cdot j$, and $R^K_{\hat{x}}:=\hat{x}\cdot k$. 
It is straightforward to verify that $Q_{\sigma}(\hat{x},R^I_{\hat{x}})=Q_{\sigma}(\hat{x},R^K_{\hat{x}})=Q_{\sigma}(\hat{x},R^K_{\hat{x}})=0$. 

Given any metric $g$ in $\HP^n$, the minimal equation of $\Sigma_\sigma\subseteq(\HP^n,g)$ is given by the same equation (\ref{H=0 case CPn}) for our maps $q_\sigma:\HP^n\rightarrow\R$.
To simplify this equation, we use an adapted version of Lemma \ref{lemma tilde-sigma}. 
Straightforward adjustments show that $\nabla^gq_{\tilde{\sigma}}(x)=-\nabla^gq_\sigma(x)$ for $x\in\Sigma_\sigma$, and 
\begin{equation}\label{eq H_V for case HPn}
    \frac{1}{2}Hess_{\overline{g}}q_\sigma(u,w)=\frac{n+1}{4n+2}\Big((4n-1)\inner{\hat{u}_V}{\hat{w}_V}-3\overline{g}(u,w)\Big),
\end{equation}
where $\hat{u},\hat{w}\in T_{\hat{x}}\S^{4n+3}$ are horizontal lifts, and $V\subseteq\HH^{n+1}$ is the quaternionic line determined by $\hat{v}\in T_{\hat{x}}\S^{4n+3}\subseteq\HH^{n+1}$.
Using this, we obtain an analogous version of Corollary \ref{Cor H=0 in two equations}.

With the discussion above, we can prove Theorem \ref{Thm A CPn} for $\HP^n$.

\begin{proof}[Proof of Theorem \ref{Thm A CPn} case $\HP^n$]
    As before, define $F=F_g:\HP^n\rightarrow\R$ by $dV_g=F^{(4n+1)/2}dV_{\overline{g}}$.
    In this case, equation (\ref{eq tr(T)(X)=T(X,X,X) CPn}) for our contexts is equivalent (as before) to the tensor $k:=F^{-1}g$ being a Killing $2$-tensor.

    Given $x\in\Sigma_\sigma$, there is a unique vector $v\in T_x\HP^n$ such that $\sigma=\gamma_v(d)$ where $d=\arccos\big(\sqrt{\frac{3}{4n+2}}\big)$ and $\gamma_v$ is the geodesic with $\gamma'(0)=v$.
    This determines a bijection $\sigma=\sigma(v)$ between the unit vectors $v$ of $T_x\HP^n$ and the elements $\sigma\in\HP^n$ with $x\in \Sigma_\sigma$. 
    The Hessian of $q_\sigma=q_{\sigma(v)}$ is determined by (\ref{eq H_V for case HPn}) and must satisfy (\ref{eq tr(H)=H(N.N) CPn}).
    Using Lemma \ref{lemma tr(Hv)=Hv(Nv,Nv) for HPn} of the Appendix to each $T_x\HP^n\cong\R^{4n}$ we get that $k=\lambda^{-1}\overline{g}$ for some positive function $\lambda:\HP^n\rightarrow\R$.
    We show that $\map{\lambda}{\HP}{n}{\R}$ is constant by using the Killing equation for $\overline{g}$ and for $k$ just as in the $\CP^n$ case. 
    Finally, the original metric $g=Fk=F\lambda^{-1}\overline{g}$ must have $F$ constant by the same argument as in $\CP^n$.
\end{proof}

\section{Conformal deformations in the complex projective space}\label{Section conformal deformations of CPn}
For simplicity, we analyze the case $\CP^n=(\CP^n,\overline{g})$, although a similar study can be done on the other projective spaces. 

\text{ }

We know that $(\CP^n,\overline{g})$ has an oriented Zoll family of minimal hypersurfaces $\mathcal{F}=\{\Sigma_\sigma\}_{\sigma\in\CP^n}$ (see Lemma \ref{lem zoll family}).
We are interested in the perturbation problem, more precisely, in Question \ref{question exist one parameter family} that, for convenience of the reader, we rewrite here: 
\begin{question}\label{question exist one parameter family, second time}
    Is there a smooth family of Riemannian metrics in $\CP^n$, say $g_t$ with $t\in(-\varepsilon,\varepsilon)$, having an oriented Zoll family of minimal hypersurfaces $\mathcal{F}_t=\{\Sigma_{\sigma}^t\}$ (smooth in $t$) with $(g_t,\mathcal{F}_t)|_{t=0}$ being the canonical family $(\overline{g},\overline{\mathcal{F}})$?
\end{question}

If such a family $(g_t,\mathcal{F}_f)$ exists, the area of $\Sigma_\sigma^t\in\mathcal{F}_t$ does not depend on $\sigma\in\CP^n$ by the first variation formula.
Moreover, since we can rescale the metric $g_t$ at each $t$, we can assume that this value also does not depend on $t$. When this condition holds, we simply write $g_t\in\mathcal{Z}$.

Just as in the classical result of Guillemin (or in the results of \cite{AMN}), we study the last question in the conformal directions. 
Namely, we assume that $g_t=e^{2\psi_t}\overline{g}\in\mathcal{Z}$ for some functions $\psi_t:\CP^n\rightarrow\R$.

For $f\in C^\infty(\CP^n)$, we define its {\it Funk transform associated to} $\overline{\mathcal{F}}=\{\Sigma_\sigma\}$, or simply its {\it Funk transform}, as the map $\F(f)\in  C^\infty(\CP^n)$ given by
$$(\pmb{F}(f))(\sigma):=\int_{\Sigma_\sigma}f(x)dA_{\overline{g}},$$
where $dA_{\overline{g}}$ is the volume form of $\Sigma_\sigma\subseteq\CP^n$.
This type of transformations have many applications; see \cite{Helgason1999Radon} for a classical reference. 

\begin{lem}\label{lem f must be in kernel of F}
    Assume that $g_t=e^{2\psi_t}\overline{g}\in\mathcal{Z}$ and let $f:=\frac{d}{dt}\big|_0\psi_t\in C^{\infty}(\CP^n)$.
    Then $\F(f)=0$, that is, $f\in\ker(\F)$.
\end{lem}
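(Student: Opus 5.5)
The plan is to differentiate the area functional at $t=0$ and use the first variation formula. Since $g_t\in\mathcal{Z}$, the area $A(t):=\mathrm{Area}_{g_t}(\Sigma^t_\sigma)$ is independent of both $\sigma$ and $t$. By the smoothness of $\mathcal{F}_t$ in $t$, for each fixed $\sigma$ the leaf $\Sigma^t_\sigma$ is a smooth family of hypersurfaces with $\Sigma^0_\sigma=\Sigma_\sigma$. Concretely, we write $\Sigma^t_\sigma=\Phi_t(\Sigma_\sigma)$ for a smooth family of embeddings $\Phi_t$ with $\Phi_0=\mathrm{id}$. One subtlety is that the indexing of leaves by $\sigma$ may only be defined up to reparametrisation. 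We fix it via the foliation of $Gr^+_{2n-1}(\CP^n)$: the leaf space is a smooth manifold, and for small $t$ we identify it with $\CP^n$ by a smooth family of diffeomorphisms. Since $A(t)$ is the same for every leaf, this choice does not affect the argument.

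We compute $\frac{d}{dt}\big|_{0}\mathrm{Area}_{g_t}(\Phi_t(\Sigma_\sigma))$. We split it into two contributions: the variation of the metric with the surface fixed, and the variation of the surface with the metric fixed at $\overline{g}$. For the first, note that $g_t=e^{2\psi_t}\overline{g}$ with $\psi_0\equiv0$. The induced area form on a fixed hypersurface of real dimension $2n-1$ is therefore $e^{(2n-1)\psi_t}dA_{\overline{g}}$, whose $t$-derivative at $0$ is $(2n-1)f\,dA_{\overline{g}}$. For the second, the first variation formula in $(\CP^n,\overline{g})$ gives $-\int_{\Sigma_\sigma}H\,\overline{g}(X,N)\,dA_{\overline{g}}$, where $X=\frac{d}{dt}\big|_0\Phi_t$. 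This vanishes because $\Sigma_\sigma$ is $\overline{g}$-minimal, which holds by the choice $r=\sqrt{1/2n}$. Cross terms are of second order. Hence
$$0=\frac{d}{dt}\Big|_{t=0}A(t)=(2n-1)\int_{\Sigma_\sigma}f\,dA_{\overline{g}}=(2n-1)\,\F(f)(\sigma),$$
and since $\sigma$ is arbitrary, $\F(f)=0$.

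The main (mild) obstacle is justifying that $A(t)$ is differentiable and genuinely constant in $t$. Constancy in $\sigma$ follows from the first variation formula in $(\CP^n,g_t)$, because the leaves are minimal and vary smoothly in $\sigma$; the leaf space is connected, being identified with $\CP^n$. Constancy in $t$ is the normalisation built into the definition of $\mathcal{Z}$. Rescaling $g_t$ by a constant $c(t)$ with $c(0)=1$ only adds a constant $c'(0)/2$ to $f$. Since $\mathcal{Z}$ already assumes the normalisation with $g_t=e^{2\psi_t}\overline{g}$, we take $\psi_t$ to already include this constant. Differentiability of $A(t)$ follows from smooth dependence of $\Phi_t$ and $\psi_t$ on $t$, using compactness of $\Sigma_\sigma$.
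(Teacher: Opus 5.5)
Your proposal is correct and follows essentially the same route as the paper. You split the $t$-derivative of $\mathrm{Area}_{g_t}(\Sigma^t_\sigma)$ into the surface variation, which vanishes by the first variation formula because $\Sigma_\sigma$ is $\overline{g}$-minimal, and the metric variation on the fixed surface, which gives $(2n-1)\F(f)(\sigma)$; your extra remarks on parametrizing the leaves and on the rescaling normalization are harmless details that the paper leaves implicit.
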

\begin{proof}
    Since the area of $\Sigma_\sigma^t$ is independent of $t$ and $\sigma$, we have that
    $$\frac{d}{dt}\Big|_{t=0}\Big(\int_{\Sigma^t_\sigma}dA_{g_t}\Big)=0.$$
    In the last equality, we have two parameters varying (the hypersurface and the metric), hence, the last derivative can be computed as follows
    $$0=\frac{d}{dt}\Big|_{t=0}\Big(\int_{\Sigma^t_\sigma}dA_{\overline{g}}\Big)+\frac{d}{dt}\Big|_{t=0}\Big(\int_{\Sigma_\sigma}dA_{g_t}\Big).$$
    The first term in the last equation vanishes by the first variation formula, and, for the second term, we observe that $dA_{g_t}=e^{(2n-1)\psi_t}dA_{\overline{g}}$, so
    $$0=(2n-1)\int_{\Sigma_\sigma}fdA_{\overline{g}}=(2n-1)\F(f)(\sigma),\quad\forall\sigma\in\CP^n$$
    which concludes the lemma.
\end{proof}

The last lemma shows that, in order to prove Theorem \ref{Thm B for CPn with 2n-1 prime}, we must understand the kernel of $\F$. To do so, we will use the spectral decomposition of $L^2(\CP^n)$.

For a compact Riemannian manifold $(M^n,g)$, let $\mathcal{H}_k(M^n,g)\subseteq C^\infty(M^n)$ be the $k^{th}$-eigenspace of the Laplacian $\Delta_{g}$.
For example, it is known that any element of $\mathcal{H}_k(\S^n,\overline{g})$ is the restriction of some homogeneous polynomial $\map{p}{\R}{n+1}{\R}$ of degree $k$.

In this section, we will work with $\mathcal{H}_k:=\mathcal{H}_k(\CP^n,\overline{g})$. 
The space $\mathcal{H}_k$ can be identified with the subset of $\mathcal{H}_{2k}(\S^{2n+1},\overline{g})$ that are $\S^1$-invariant.
To be more precise, we can use complex coordinates and use a Hodge-type decomposition 
$$\mathcal{H}_{2k}(\S^{2n+1},\overline{g})\otimes\C=\bigoplus_{l=0}^{2k}\mathcal{H}_{l,2k-l}.$$
With this decomposition, we can identify $\mathcal{H}_k\otimes\C$ with $\mathcal{H}_{k,k}$ (see Section 1 of \cite{GrinbergHarmonicsProjective} for a more detailed exposition).
More precisely, the elements of $\mathcal{H}_k$ are restrictions of homogeneous harmonic polynomials of $\R^{2n+2}$ of degree $2k$ that are $\S^1$-invariant.

As shown in Section 1 of \cite{GrinbergHarmonicsProjective}, the natural action $U(n+1)\times\mathcal{H}_k\rightarrow\mathcal{H}_k$ gives irreducible and non-equivalent representations of $U(n+1)$ for each $k$.

Observe that the Funk transform $\F:C^\infty(\CP^n)\rightarrow C^\infty(\CP^n)$ commutes with the natural action of $U(n+1)$ on $C^\infty(\CP^n)$. 
Indeed, this comes from the fact that $U(n+1)$ acts by isometries in $(\CP^n,\overline{g})$ and the family $\overline{\mathcal{F}}$ is preserved by this action (the hypersurfaces of the family are geodesic balls).
Schur's Lemma implies that there are constants $c_k=c_{k,n}\in\R$ such that 
$$\F(f_k)=c_kf_k,\quad f_k\in\mathcal{H}_k.$$
Hence, to determine $c_k$ is enough to compute $\frac{\F(f_k)(\sigma)}{f_k(\sigma)}$ for any $f_k\in\mathcal{H}_k$ and $\sigma\in\CP^n$ with $f_k(\sigma)\neq0$. The next lemma gives us a candidate $f_k\in\mathcal{H}_k$.

\begin{lem}
    Given any $k,n\in\N$, there is a homogeneous polynomial
    $$p_k(x,y)=\sum_{j=0}^ka_jx^{k-j}y^{j}\in\R[x,y],$$ 
    such that, in complex coordinates $(z_i)_i\in\C^{n+1}$, the polynomial
    $$P_k(z_0,\ldots, z_n):=p_k(|z_0|^2,|z_1|^2+\ldots+|z_n|^2),$$
    is harmonic and $\S^1$-invariant. In particular, $f_k=P_k|_{\S^{2n+1}}$ can be identified with an element of $\mathcal{H}_k$ which is constant in $\Sigma_{\sigma_0}$, where $\sigma_0=[1:0:\ldots:0]$.
\end{lem}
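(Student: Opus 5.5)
We look for $P_k$ directly by imposing harmonicity, which turns into a linear recursion on the coefficients $a_j$. Write $\C^{n+1}=\C\oplus\C^n$ and set $s=|z_0|^2$, $t=|z_1|^2+\ldots+|z_n|^2$. On $\R^{2n+2}$ we have $\Delta=\Delta_{\R^2}+\Delta_{\R^{2n}}$, and the first factor only involves $z_0$, the second only $(z_1,\ldots,z_n)$. For a radial function $h(\rho^2)$ on $\R^{m}$ one has $\Delta h(\rho^2)=4\rho^2h''+2m h'$. Taking $m=2$ for the $s$-variable and $m=2n$ for the $t$-variable gives
$$\Delta\big(s^{k-j}t^{j}\big)=4(k-j)^2s^{k-j-1}t^{j}+4j(j+n-1)s^{k-j}t^{j-1}.$$

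Applying $\Delta$ to $P_k=\sum_{j=0}^k a_js^{k-j}t^j$ and collecting the coefficient of $s^{k-j-1}t^{j}$ for $0\le j\le k-1$, harmonicity becomes
$$(k-j)^2a_j+(j+1)(j+n)a_{j+1}=0,\qquad j=0,\ldots,k-1.$$
Since $(j+1)(j+n)\neq0$, we can set $a_0=1$ and solve recursively:
$$a_{j+1}=-\frac{(k-j)^2}{(j+1)(j+n)}\,a_j.$$
This gives a nonzero homogeneous harmonic polynomial of degree $2k$ in the real coordinates. (Alternatively, one could write $P_k$ as a Jacobi polynomial in $s-t$, but the recursion suffices.)

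The remaining properties follow directly from the construction.

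\textbf{$\S^1$-invariance.} $P_k$ depends only on the moduli $|z_i|^2$, so it is invariant under $z\mapsto e^{i\theta}z$. Hence $f_k=P_k|_{\S^{2n+1}}$ is an $\S^1$-invariant element of $\mathcal{H}_{2k}(\S^{2n+1},\overline{g})$. By the identification recalled above (Section 1 of \cite{GrinbergHarmonicsProjective}), it defines an element of $\mathcal{H}_k$.

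\textbf{Nontriviality.} $f_k(\sigma_0)=P_k(1,0,\ldots,0)=a_0=1\neq0$, so $f_k\neq0$.

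\textbf{Constancy on $\Sigma_{\sigma_0}$.} By (\ref{Sigma-hat, torus}), $\pi^{-1}(\Sigma_{\sigma_0})$ consists of the points of $\S^{2n+1}$ with $s=|z_0|^2=r^2$ and $t=1-r^2$. There $f_k=p_k(r^2,1-r^2)$, which is constant.

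The only real computation is the Laplacian formula for the monomials; the main care point is the dimension constants $m=2$ and $m=2n$ in the radial Laplacian, which produce the factors $(k-j)^2$ and $j(j+n-1)$. Everything else is bookkeeping.
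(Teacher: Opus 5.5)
Your proof is correct and follows the paper's route. You impose harmonicity on $\sum_j a_j s^{k-j}t^j$ and obtain the same two-term recursion $(k-j)^2a_j+(j+1)(j+n)a_{j+1}=0$; the paper computes it with $\partial_{z}\partial_{\bar z}$ instead of the radial Laplacian, which is immaterial. You then read off $\S^1$-invariance and constancy on $\Sigma_{\sigma_0}$ from the fact that $P_k$ depends only on $|z_0|^2$ and $|z_1|^2+\dots+|z_n|^2$. The paper also solves the recursion in closed binomial form; the lemma does not need this, but the later computation of the eigenvalues $c_k$ uses it.
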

\begin{proof}
    The condition $\Delta P_k=0$ restricts the coefficients of $p_k$. 
    More precisely,
    \begin{align*}
        0&=\frac{1}{4}\Delta P=\partial_{z_0}\partial_{\overline{z_0}}P+\sum_{l=1}^n\partial_{z_l}\partial_{\overline{z_l}}P\\
        &=\sum_{j=0}^{k-1}a_j(k-j)^2(|z_0|^2)^{k-j-1}(|z_1|^2+\ldots+|z_n|^2)^j\\
        &\quad+\sum_{j=1}^ka_jj(n+j-1)(|z_0|^2)^{k-j}(|z_1|^2+\ldots+|z_n|^2)^{j-1}\\
        &=\sum_{j=1}^k(a_{j-1}(k-j+1)^2+a_j(n+j-1)j)(|z_0|^2)^{k-j}(|z_1|^2+\ldots+|z_n|^2)^{j-1},
    \end{align*}
    which implies
    $$0=a_{j-1}(k-j+1)^2+a_j(n+j-1)j,\quad j=1,\ldots,k.$$
    Solving the last recurrence we get that
    \begin{equation}\label{coefficient polynomial}
        a_j=a_0\frac{\binom{k}{j}\binom{n+k-1}{k-j}}{\binom{n+k-1}{k}}(-1)^j,\quad j=0,\ldots,k,
    \end{equation}
    where $a_0$ is free.
    In particular, if we choose $a_0=\binom{n+k-1}{k}$ we get
    $$p(x,y)=\sum_{j=0}^k\binom{k}{j}\binom{n+k-1}{k-j}(-1)^jx^{k-j}y^j.$$
    If $x=[z_0:\ldots:z_n]\in\Sigma_0$, then $|z_0|^2=r^2=\frac{1}{2n}=1-\sum_{l=1}^n|z_l|^2$, so
    $$P_k(x)=\sum_{j=0}^k\binom{k}{j}\binom{n+k-1}{k-j}(-1)^jr^{2(k-j)}(1-r^2)^j,$$
    where $r^2=\frac{1}{2n}$. That is, the value of $P_k$ does not depend on $x\in\Sigma_0$.
\end{proof}
\begin{cor}\label{Cor formula of ck}
    For $f\in\mathcal{H}_k$ we have $\F(f)=c_kf$, where
    $$c_k=c_{k,n}=\frac{\F(f_k)(\sigma_0)}{f_k(\sigma_0)}=|\Sigma_{\sigma_0}|\sum_{j=0}^k\binom{k}{j}\binom{n+k-1}{k-j}(-r^2)^{k-j}(1-r^2)^j,$$
    where $|\Sigma_{\sigma_0}|$ is the area of $\Sigma_{\sigma_0}\subseteq\CP^n$ and $r^2=\frac{1}{2n}$.
\end{cor}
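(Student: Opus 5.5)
The plan is to compute the ratio in the first equality of the statement directly, using the explicit eigenfunction $f_k=P_k|_{\S^{2n+1}}$ from the preceding Lemma, and then compare the result with the displayed sum. I expect the comparison to be the main obstacle: as explained below, a direct evaluation does not seem to reproduce the displayed sum exactly.

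\textbf{Step 1: reduction to one function at one point.} Since $\F$ commutes with the $U(n+1)$-action and each $\mathcal{H}_k$ is irreducible, Schur's Lemma gives $\F|_{\mathcal{H}_k}=c_k\,\mathrm{Id}$. So for any $f\in\mathcal{H}_k$ and any $\sigma$ with $f(\sigma)\neq0$ we have $c_k=\F(f)(\sigma)/f(\sigma)$. This ratio is unchanged if $f$ is rescaled, so it does not depend on how $f_k$ is normalized. We take $f=f_k$ with $a_0=\binom{n+k-1}{k}$ and $\sigma=\sigma_0=[1:0:\ldots:0]$. This proves the first equality of the statement.

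\textbf{Step 2: evaluating the numerator.} By the preceding Lemma, $f_k$ is constant on $\Sigma_{\sigma_0}$. On $\Sigma_{\sigma_0}$ we have $|z_0|^2=r^2$ and $\sum_{l\ge1}|z_l|^2=1-r^2$ with $r^2=\frac1{2n}$. Hence
$$\F(f_k)(\sigma_0)=|\Sigma_{\sigma_0}|\,p_k(r^2,1-r^2)=|\Sigma_{\sigma_0}|\sum_{j=0}^k\binom{k}{j}\binom{n+k-1}{k-j}(-1)^jr^{2(k-j)}(1-r^2)^j .$$

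\textbf{Step 3: evaluating the denominator.} At $\sigma_0$ we have $(|z_0|^2,\sum_{l\ge1}|z_l|^2)=(1,0)$. Only the $j=0$ term of $p_k$ survives, so $f_k(\sigma_0)=p_k(1,0)=a_0=\binom{n+k-1}{k}\neq0$. In particular the ratio in Step 1 is well defined.

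\textbf{Step 4: comparison with the displayed sum.} Dividing Step 2 by Step 3 gives
$$c_k=\frac{|\Sigma_{\sigma_0}|}{\binom{n+k-1}{k}}\sum_{j=0}^k\binom{k}{j}\binom{n+k-1}{k-j}(-1)^jr^{2(k-j)}(1-r^2)^j .$$
The displayed formula instead writes $(-r^2)^{k-j}$ in place of $(-1)^jr^{2(k-j)}$, which is a global factor $(-1)^k$. It also omits the factor $1/\binom{n+k-1}{k}$. To reconcile them, I would first try reindexing $j\mapsto k-j$ and Vandermonde-type binomial identities. I would then test the case $k=1$ against an independent computation.

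That test can be done exactly. For $k=1$ the function $q_{\sigma_0}$ lies in $\mathcal{H}_1$, satisfies $q_{\sigma_0}\equiv c_n=-\frac{n-1}{2n}$ on $\Sigma_{\sigma_0}$, and has $q_{\sigma_0}(\sigma_0)=n$. Therefore
$$c_1=-\frac{n-1}{2n^2}\,|\Sigma_{\sigma_0}| ,$$
which agrees with Step 4. For $n=2$ this equals $-\frac18|\Sigma_{\sigma_0}|$. The displayed sum at $k=1$, $n=2$ gives instead $\bigl(-\frac12+\frac34\bigr)|\Sigma_{\sigma_0}|=\frac14|\Sigma_{\sigma_0}|$.

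So the formula I can actually establish is the one in Step 4. It differs from the displayed sum exactly by the factor $(-1)^k/\binom{n+k-1}{k}$. This factor is nonzero, so the vanishing set $\{k: c_k=0\}$ is the same for both expressions. That is all that is used later to describe $\ker(\F)$.
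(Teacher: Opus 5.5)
Your argument is correct and follows the same route the paper takes: Schur's Lemma, then evaluating the explicit eigenfunction $f_k$ on $\Sigma_{\sigma_0}$, where it is constant, and at $\sigma_0$, where it equals $a_0=\binom{n+k-1}{k}$. You are also right that the displayed closed form is off by the nonzero factor $(-1)^k/\binom{n+k-1}{k}$. The paper's own preceding Lemma gives the value $\sum_j\binom{k}{j}\binom{n+k-1}{k-j}(-1)^jr^{2(k-j)}(1-r^2)^j$ on $\Sigma_{\sigma_0}$, and the Corollary omits the division by $f_k(\sigma_0)$ and replaces $(-1)^j$ by $(-1)^{k-j}$; your independent $k=1$ check with $q_{\sigma_0}$ confirms this. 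As you note, the error is harmless for Theorem \ref{Thm B for CPn with 2n-1 prime}: there the quantity $S_k$ actually equals $c_k(2n)^k\binom{n+k-1}{k}/|\Sigma_{\sigma_0}|$ rather than $c_k(-1)^k(2n)^k/|\Sigma_{\sigma_0}|$, and only its nonvanishing is used.
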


\begin{proof}[Proof of Theorem \ref{Thm B for CPn with 2n-1 prime}]
    By Lemma \ref{lem f must be in kernel of F}, we know that $f:=\frac{d}{dt}\big|_{t=0}\psi_t\in\ker(\F)$.
    We will show that $\ker(\F)=\{0\}$.

    By Schur's Lemma, $\F$ diagonalizes in the spectral decomposition of 
    $$L^2(\CP^n)=\bigoplus_{k=0}^\infty\mathcal{H}_k,$$ 
    and the eigenvalues $c_k$ are given by Corollary \ref{Cor formula of ck}. 
    Hence, $\ker(\F)=\{0\}$ if and only if $c_k\neq0$ for any $k\geq0$. Equivalently, it is enough to show that
    \begin{equation}
        S_k:=\frac{c_k(-1)^k(2n)^k}{|\Sigma_{\sigma_0}|}=\sum_{j=0}^k\binom{k}{j}\binom{n+k-1}{k-j}(-p)^j\neq0,
    \end{equation}
    where $p=2n-1$ is prime.

    For $m\in\N$, let $v_p(m)\in\N$ the unique nonnegative integer such that $p^{v_p(m)}$ divides $m$ but $p^{v_p(m)+1}$ does not.
    Consider 
    $$s_j:=\binom{k}{j}\binom{n+k-1}{k-j}(-p)^j,$$ 
    we claim that
    \begin{equation}\label{eq involving vp}
        v_p(s_j)\geq v_p(s_0)+1,\quad \forall j\geq1.
    \end{equation}
    Proving (\ref{eq involving vp}) proves that $S_k\neq0$ since in this case
    $$S_k=\sum_{j=0}^ks_j\equiv s_0\not\equiv0,\quad(\text{mod } p^{v_p(s_0)+1}).$$

    Let us prove (\ref{eq involving vp}). 
    Observe that
    $$s_j=\binom{k}{j}^2\binom{n+k-1}{k}\binom{j+n-1}{j}^{-1}(-p)^j,$$
    so
    \begin{align*}
        v_p(s_j)&=2v_p\Big(\binom{k}{j}\Big)+v_p\Big(\binom{n+k-1}{k}\Big)-v_p\Big(\binom{j+n-1}{j}\Big)+j\\
        &=2v_p\Big(\binom{k}{j}\Big)+v_p(s_0)-v_p\Big(\binom{j+n-1}{j}\Big)+j\\
        &\geq0+v_p(s_0)-v_p\Big(\binom{j+n-1}{j}\Big)+j.
    \end{align*}
    Hence, in order to prove (\ref{eq involving vp}), it is enough to show
    \begin{equation}\label{eq j geq vp(combinatorial)+1}
        j\geq v_p\Big(\binom{j+n-1}{j}\Big)+1,\quad\forall j\geq1.
    \end{equation}
    Observe that
    $$\binom{j+n-1}{j}=\frac{(j+1)(j+2)\ldots(j+n-1)}{n!},$$
    and since $p=2n-1>n$ is prime, $v_p(n!)=0$, so
    $$v_p\Big(\binom{j+n-1}{j}\Big)=\sum_{i=1}^{n-1}v_p(j+i).$$
    Moreover, since $p>n-1$, at most one of the terms in $\{j+i\}_{i=1}^{n-1}$ is divisible by $p$.
    If none of them is divisible by $p$ then (\ref{eq j geq vp(combinatorial)+1}) is trivial.
    Assume that $j+i_0$ is divisible by $p$, so $v_p(\binom{j+n-1}{j})=v_p(j+i_0)=:v\geq1$.
    Write $j+i_0=p^{v}a$, where $p$ does not divide $a$.
    Observe that
    $$j-1=p^va-i_0-1\geq p^v-(n-1)-1=p^v-\frac{p+1}{2}.$$
    Consider $f(x)=x^v-\frac{x+1}{2}$, and notice that $f'(x)>0$ for $x\geq3$.
    Since $p$ is an odd prime then $p\geq 3$, so
    $$j-1\geq f(p)\geq f(3)=3^v-2\geq v,\quad v\in\N,$$
    where the last inequality comes from an elementary induction. This proves (\ref{eq j geq vp(combinatorial)+1}) and concludes the proof.
\end{proof}

\section{Appendix}\label{section appendix}

This Appendix is separated into two subsections; the first one is to adapt a well-known result in the literature, and the second is dedicated to proving a technical lemma required in this work.

\subsection{A metric relation with Killing tensors}

\text{ }

Hangan \cite{HanganMinimalHyperplanes} characterization of metrics in $\R^n$ with minimal hyperplanes, and Ambrozio-Marques-Neves \cite{AmbrozioMinimalEquators} classification of metrics in $\S^n$ with minimal equators depend on Killing tensors. 
In this subsection, we present the precise technical step where such tensors appear.

\text{ }

The next result is deduced from results of Section $2$ of \cite{AmbrozioMinimalEquators}.
For completeness, we present the sketch of the proof.

\begin{lem}\label{lemma T(X,X,X)=g(X,X)tr(T)(X) then k Killing}
    Consider an oriented manifold $M^n$ of dimension $n$. 
    Assume that $g$ and $\overline{g}$ are Riemannian metrics in $M^n$, and let
    $$T(X,Y,Z):=g(\nabla^g_XY-\overline{\nabla}_XY,Z),\quad X,Y,Z\in TM$$
    where $\nabla^g$ and $\overline{\nabla}$ are the Levi-Civita connections associated to $g$ and $\overline{g}$, respectively. Then the following conditions are equivalent
    \begin{itemize}
        \item[$i)$] $T(X,X,X)=g(X,X)tr_g^{12}(T)(X)$ for all $X\in TM$;
        \item[$ii)$] The Riemannian metric $k:=F^{-1}g$ satisfies the Killing equation
        $$\overline{\nabla}k(X,X,X)=0,$$
        where $F$ is determined by relating the volume forms associated to $g$ and $\overline{g}$ as $dV_g=F^{\frac{n+1}{4}}dV_{\overline{g}}$.
    \end{itemize}
    
\end{lem}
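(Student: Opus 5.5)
The plan is to turn both conditions into statements about the totally symmetric part of $\overline{\nabla}g$ and then compare them using the volume normalization. First I express $T$ through $\overline{\nabla}g$. Since $\nabla^g$ is torsion free and $g$-parallel, the difference of the Koszul formulas for $g$ (computed with $\nabla^g$ and with $\overline{\nabla}$) gives
$$T(X,Y,Z)=\tfrac12\big[(\overline{\nabla}_Xg)(Y,Z)+(\overline{\nabla}_Yg)(X,Z)-(\overline{\nabla}_Zg)(X,Y)\big].$$
In particular $T(X,X,X)=\tfrac12(\overline{\nabla}_Xg)(X,X)$. Take a $g$-orthonormal frame $\{e_i\}$ and set $\beta(Z):=\sum_i(\overline{\nabla}_{e_i}g)(e_i,Z)$. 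Then
$$tr^{12}_g(T)(Z)=\beta(Z)-\tfrac12\,tr_g(\overline{\nabla}_Zg).$$
I also use the standard identity $tr_g(\overline{\nabla}_Zg)=Z\log\det(\overline{g}^{-1}g)=2Z\log(dV_g/dV_{\overline{g}})$. With $dV_g=F^{\frac{n+1}{4}}dV_{\overline{g}}$ and $\alpha:=d\log F$, this becomes
$$tr_g(\overline{\nabla}_Zg)=\tfrac{n+1}{2}\alpha(Z).$$

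Next I rewrite $ii)$. Since $\overline{\nabla}k=F^{-1}(\overline{\nabla}g-\alpha\otimes g)$, condition $ii)$ is equivalent to
$$(\overline{\nabla}_Xg)(X,X)=\alpha(X)g(X,X)\quad\text{for all }X.$$
Condition $i)$ reads $(\overline{\nabla}_Xg)(X,X)=\gamma(X)g(X,X)$ with $\gamma:=2\,tr^{12}_g(T)$. So both conditions have the form "the cubic form $X\mapsto(\overline{\nabla}_Xg)(X,X)$ equals $\theta(X)g(X,X)$ for some $1$-form $\theta$". The whole task is to show that the relevant $1$-form is forced to be $\alpha$ in one direction and $\gamma$ in the other.

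The key step is to polarize and trace. Suppose $(\overline{\nabla}_Xg)(X,X)=\theta(X)g(X,X)$ for some $1$-form $\theta$. Polarizing the cubic form gives
$$(\overline{\nabla}_Xg)(Y,Z)+(\overline{\nabla}_Yg)(Z,X)+(\overline{\nabla}_Zg)(X,Y)=\theta(X)g(Y,Z)+\theta(Y)g(Z,X)+\theta(Z)g(X,Y).$$
Putting $Y=Z=e_i$ and summing yields
$$tr_g(\overline{\nabla}_Xg)+2\beta(X)=(n+2)\theta(X),\qquad\text{that is,}\qquad \tfrac{n+1}{2}\alpha+2\beta=(n+2)\theta.$$
Separately, the formula for the trace of $T$ gives
$$\gamma=2\beta-\tfrac{n+1}{2}\alpha.$$

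For $ii)\Rightarrow i)$, take $\theta=\alpha$. The traced identity gives $\beta=\tfrac{n+3}{4}\alpha$, hence $\gamma=\alpha$. Therefore $T(X,X,X)=\tfrac12\alpha(X)g(X,X)=g(X,X)\,tr^{12}_g(T)(X)$, which is $i)$.

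For $i)\Rightarrow ii)$, take $\theta=\gamma$. Substituting $2\beta=\gamma+\tfrac{n+1}{2}\alpha$ into the traced identity gives $(n+1)\alpha+\gamma=(n+2)\gamma$, so $\gamma=\alpha$. Then $(\overline{\nabla}_Xg)(X,X)=\alpha(X)g(X,X)$, which is $ii)$.

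I expect the main obstacle to be bookkeeping rather than ideas. One must get the Koszul expression for $T$ right, and correctly identify $tr_g(\overline{\nabla}_Zg)$ with the logarithmic derivative of the volume ratio. The exponent $\frac{n+1}{4}$ in the definition of $F$ is exactly what makes the two traced identities close up to $\gamma=\alpha$, so I would double-check these constants first.
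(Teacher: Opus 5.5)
Your proof is correct and follows essentially the same route as the paper. Both arguments polarize the cubic identity, take a single $g$-trace, and use $tr_g(\overline{\nabla}_Z g)=2\,Z\log(dV_g/dV_{\overline{g}})$ to force the $1$-form multiplying $g(X,X)$ to be $d\log F$; the paper packages this through $\mathcal{S}=T-g\otimes tr^{12}_g(T)$ and its cyclic symmetrization. Your version is more self-contained, since you derive the Koszul expression for $T$ and the reformulation of the Killing equation directly instead of citing Lemma 2.4, Proposition 2.5 and Theorem 2.6 of Ambrozio's survey. It also spells out the converse $ii)\Rightarrow i)$, which the paper leaves implicit.
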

\begin{proof}
    Let $S_3$ be the group of permutations of the set $\{1,2,3\}$.
    For a three tensor $L(X_1,X_2,X_3)$ in $M^n$, consider 
    $$L^S(X_1,X_2,X_3)=L(X_1,X_2,X_3)+L(X_2,X_3,X_1)+L(X_3,X_1,X_2),$$
    the cyclic symmetrization of $L$.
    
    As proved in Lemma 2.4 of \cite{AmbrozioMinimalEquators}, we always have $T^S=\frac{1}{2}(\overline{\nabla}g)^S$ and
    $$tr_g^{23}(T)=d(\log(\psi)),$$
    where $\psi:M^n\rightarrow\R_{>0}$ is determined by the volume forms of $g$ and $\overline{g}$ with the relation $dV_g=\psi dV_{\overline{g}}$.

    Consider the tensor $\mathcal{S}:=T-g\otimes tr_g^{12}(T)$, and observe that $\mathcal{S}$ is symmetric in the first two entries.
    Using the identities proved in Proposition 2.5 of \cite{AmbrozioMinimalEquators}, we can verify
    \begin{align*}
        tr_g^{12}(\mathcal{S}^S)-2d\log(\psi)&=tr_g^{12}(\mathcal{S})+2tr_g^{23}(\mathcal{S})-2d\log(\psi)\\
        &=tr_g^{12}(T)-tr(g\otimes tr_g^{12}(T))-2tr_g^{23}(g\otimes tr_g^{12}(T))\\
        &=-(n+1)tr_g^{12}(T).
    \end{align*}
    Hence,
    \begin{align*}
        \mathcal{S}^S&=(T-g\otimes tr_g^{12}(T))^S\\
        &=\Big(\frac{1}{2}\overline{\nabla}g-\frac{2}{n+1}g\otimes d\log(\psi)+\frac{1}{n+1}g\otimes tr_g^{12}(\mathcal{S}^S)\Big)^S.
    \end{align*}
    If condition $(i)$ holds, then $\mathcal{S}(X,X,X)=0$ for any $X\in TM$, and so by polarization $\mathcal{S}^S=0$.
    This implies
    $$\Big(\overline{\nabla}g-\frac{4}{n+1}g\otimes d\log(\psi)\Big)^S=0.$$
    As shown in Theorem 2.6 of \cite{AmbrozioMinimalEquators}, the last equation is equivalent to $(ii)$
\end{proof}

\subsection{An algebraic result for quaternionic vector spaces}

\text{ }

In this section, we prove the final algebraic argument in the proof of Theorem \ref{Thm A CPn} for the case $\HP^n$.

\text{ }

Consider in $\R^{4n}=\HH^n$ the canonical inner product $\overline{g}$ and the quaternion structures $I,J,K:\R^{4n}\rightarrow\R^{4n}$.
For each $v\in\R^{4n}$ with $\overline{g}(v,v)=1$ consider $H_v:\R^{4n}\times\R^{4n}\rightarrow\R$ the symmetric operator $H_v(u,w)=\overline{g}(\hat{H}_v(u),w)$ where
$$\hat{H}_v(u):=(4n-1)\Big(\overline{g}(u,v)v+\sum_{\alpha\in\{I,J,K\}}\overline{g}(u,\alpha v)\alpha v\Big)-3u.$$

\begin{lem}\label{lemma tr(Hv)=Hv(Nv,Nv) for HPn}
    Let $k$ be an inner product in $\R^{4n}=\HH^n$, $n\geq2$.
    Assume that 
    \begin{equation}\label{eq lemma appendix tr)Hv=Hv(Nv,Nv)}
        tr_k(H_v)=H_v(N_v,N_v),\quad\forall v\in\mathbb{R}^{4n}\text{ with } \overline{g}(v,v)=1,
    \end{equation}
    where $N_v$ is determined (up to sign) by $k(N_v,N_v)=1$ and $k(N_v,w)=0$ for any $w$ with $\overline{g}(w,v)=0$.
    Then $k=\lambda^{-1}\overline{g}$ for some constant $\lambda>0$.
\end{lem}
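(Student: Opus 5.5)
The plan is to reduce \eqref{eq lemma appendix tr)Hv=Hv(Nv,Nv)} to a polynomial identity in $v$ for a single positive operator, and then extract enough information from that identity to force the operator to be scalar.

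\textbf{Step 1: reformulation.} Write $k(u,w)=\overline{g}(Au,w)$ with $A$ positive and $\overline{g}$-selfadjoint, and set $C:=A^{-1}$. For a bilinear form $B(u,w)=\overline{g}(\hat Bu,w)$ we have $tr_k(B)=tr(C\hat B)$. The condition that $N_v$ is $k$-orthogonal to $v^{\perp}$ means $AN_v\parallel v$, so
$$N_v=\frac{Cv}{\overline{g}(Cv,v)^{1/2}}.$$
Let $P_v$ be the $\overline{g}$-orthogonal projection onto the quaternionic line $\mathrm{span}\{v,Iv,Jv,Kv\}$, so that $\hat H_v=(4n-1)P_v-3\,\mathrm{Id}$.

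Substituting into \eqref{eq lemma appendix tr)Hv=Hv(Nv,Nv)} and clearing the denominator $\overline{g}(Cv,v)$, the terms $(4n-1)\overline{g}(Cv,v)^2$ cancel on both sides. What remains is the homogeneous quartic identity, valid for all $v\in\R^{4n}$,
$$\Big((4n-1)\sum_{\alpha\in\{I,J,K\}}\overline{g}(C\alpha v,\alpha v)-3\,tr(C)|v|^2\Big)\overline{g}(Cv,v)=(4n-1)\sum_{\alpha\in\{I,J,K\}}\overline{g}(Cv,\alpha v)^2-3|v|^2|Cv|^2 .$$
The goal is then to show that this identity forces $C=\lambda\,\mathrm{Id}$. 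The constant $\lambda$ here is pointwise; the statement concerns a single inner product.

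\textbf{Step 2: eigenvectors give only a trace condition.} Take an orthonormal eigenbasis $\{e_a\}$ of $C$ with $Ce_a=\mu_ae_a$. At $v=e_a$ the right-hand side loses its first sum, since $\overline{g}(e_a,\alpha e_a)=0$, and the identity reduces to
$$3\big(tr(C)-\mu_a\big)=(4n-1)\sum_{\alpha}\overline{g}(C\alpha e_a,\alpha e_a).$$
Summing over $a$ gives a tautology, and the minimum-eigenvalue trick used for $\CP^n$ only reproduces the trivial inequality $tr(C)\geq 4n\mu_{\min}$. So eigenvectors alone are insufficient, and this is where the quaternionic case differs from $\CP^n$.

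\textbf{Step 3 (main obstacle): mixed eigenvectors.} Substitute $v=e_a+te_b$ and compare the coefficients of $t$, $t^2$ and $t^3$. I expect the linear and cubic coefficients to give linear relations between the off-diagonal quantities $\overline{g}(C\alpha e_a,\alpha e_b)$ and $\overline{g}(C e_a,\alpha e_b)$, weighted by $\mu_a$ and $\mu_b$. I expect the $t^2$ coefficient to contain the term $(\mu_a-\mu_b)^2$, coming from $|Cv|^2$ versus $\overline{g}(Cv,v)^2$, together with terms $\overline{g}(Ce_a,\alpha e_b)^2$.

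The key choice is to exploit $n\geq2$: for each $a$ one can pick $e_b$ so that $\mathrm{span}_{\R}\{e_b\}$ is as far as possible from the quaternionic line of $e_a$. If necessary, replace $e_b$ by a unit vector in the $C$-invariant subspace orthogonal to $\mathrm{span}\{e_a,Ie_a,Je_a,Ke_a\}$; this subspace is nonzero precisely because $n\geq2$. This should kill the cross terms and leave an identity of the form $c_n(\mu_a-\mu_b)^2=0$ with $c_n\neq0$. An alternative, in case the coefficients become unwieldy, is to average the quartic identity over the $Sp(1)$-orbit of $v$ and over $Sp(n)$-conjugates.

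\textbf{Step 4: conclusion.} Once all eigenvalues are shown to be equal, $C=\lambda\,\mathrm{Id}$, hence $k=\lambda^{-1}\overline{g}$ with $\lambda>0$. The remaining risk is Step 3: the coefficient comparison may require choosing $e_b$ carefully relative to the $I,J,K$-images of $e_a$, and the verification that the resulting constant $c_n$ is nonzero is exactly where $n\geq 2$ must enter.
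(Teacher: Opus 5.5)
The gap is in Step 3, and it is a missing idea, not bookkeeping. Your Steps 1 and 2 are correct and match the paper's reduction. Your quartic identity is the paper's after cancelling $(4n-1)\overline{g}(Cv,v)^2$. Your eigenvector relation is its $(v^a)^4$-coefficient, $(4n-1)Q_{aa}=3(\mathrm{tr}\,C-\mu_a)$, where $Q_{ab}:=\sum_{\alpha}\overline{g}(C\alpha e_a,\alpha e_b)$.

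There are two problems with Step 3.

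First, the device of taking $e_b$ in a $C$-invariant subspace orthogonal to $\mathrm{span}\{e_a,Ie_a,Je_a,Ke_a\}$ is not available. $C$ need not commute with $I,J,K$, so this complement is not $C$-invariant and in general contains no eigenvector of $C$. For a non-eigenvector $e_b$, the expansion of $v=e_a+te_b$ does not isolate $(\mu_a-\mu_b)^2$.

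Second, even for two eigenvectors, $v=e_a+te_b$ yields only $Q_{ab}=0$ (from the $t$ and $t^3$ terms) and, from the $t^2$ term,
\[
(\mu_a-\mu_b)^2\big(3-(4n-1)|e_{ab}|^2\big)=0,\qquad e_{ab}:=\big(\overline{g}(e_a,\alpha e_b)\big)_{\alpha\in\{I,J,K\}}\in\R^3 .
\]
This allows $\mu_a\neq\mu_b$ whenever $|e_{ab}|^2=3/(4n-1)$. Pair relations give you no control on how many $b$ can do this. The identity $\sum_b|e_{ab}|^2=3$ is compatible with every $b\neq a$ having exactly this value. For $n=1$ this always happens, and there the hypothesis is vacuous because $\hat H_v=0$. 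So no nonzero constant $c_n$ in front of $(\mu_a-\mu_b)^2$ comes out of pairwise substitutions. Your averaging alternative is weaker still, since integrating over the sphere produces a single scalar relation.

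The missing idea is to use three eigenvectors at once, which is what the paper does. Take the coefficient of $(v^a)^2v^bv^c$ with $a,b,c$ distinct and combine it with $Q_{bc}=0$. This gives
\[
(\mu_a-\mu_b)(\mu_a-\mu_c)\,\langle e_{ab},e_{ac}\rangle=0 .
\]
Fix $a$. The vectors $e_{ab}$ with $\mu_b\neq\mu_a$ are nonzero by the $t^2$ relation, and pairwise orthogonal in $\R^3$ by the relation above. So at most three indices $b$ have $\mu_b\neq\mu_a$, i.e. every eigenspace of $C$ has codimension at most $3$. Two distinct eigenspaces would then force $2(4n-3)\leq 4n$, i.e. $n\leq 1$.

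This counting in $\R^3$ is where $n\geq 2$ genuinely enters, rather than through the complement of a quaternionic line being nonzero. It is the content of the paper's Claims 1--4.
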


\begin{proof}
    Write $k(X,Y)=\overline{g}(AX,Y)$ for some map $A:\R^{4n}\rightarrow \R^{4n}$ and let $B:=A^{-1}$. 
    Since $A$ is symmetric and positive definite, write $Be_i=\lambda_ie_i$ for some orthonormal basis $\{e_i\}_i$ of $\R^{4n}$ with $\lambda_i>0$ (orthonormal with respect to $\overline{g})$.
    We will express (\ref{eq lemma appendix tr)Hv=Hv(Nv,Nv)}) using this basis.
    
    Notice that $N_v=\frac{Bv}{\sqrt{\overline{g}(v,Bv)}}$, so the right-hand side of (\ref{eq lemma appendix tr)Hv=Hv(Nv,Nv)}) is given by
    \begin{align*}
        H_v(N_v,N_v)&=\frac{H_v(Bv,Bv)}{\overline{g}(v,Bv)}\\
        &=\frac{\overline{g}((4n-1)[\overline{g}(Bv,v)v+\sum_{\alpha\in\{I,J,K\}}\overline{g}(Bv,\alpha v)\alpha v]-3Bv,Bv)}{\overline{g}(v,Bv)}
    \end{align*}
    \begin{equation}\label{Hv(Nv,Nv) in HPn}
        H_v(N_v,N_v)=\frac{(4n-1)\Big[\overline{g}(Bv,v)^2+\sum_{\alpha\in\{I,J,K\}}\overline{g}(Bv,\alpha v)^2\Big]-3\overline{g}(Bv,Bv)}{\overline{g}(v,Bv)}.
    \end{equation}
    On the other hand, if $L_v\subseteq\R^{4n}$ is the quaternionic line determined by $v$, then the right-hand side of (\ref{eq lemma appendix tr)Hv=Hv(Nv,Nv)}) can be written as
    \begin{align*}
        tr_k(H_v)&=tr_{\overline{g}}(B\hat{H}_v)=tr_{\overline{g}}(B\hat{H}_v|_{Lv^{\perp}})+\overline{g}(B\hat{H}_vv,v)+\sum_{\alpha\in\{I,J,K\}}\overline{g}(B\hat{H}_v\alpha v,\alpha v)\\
        &=-3tr_{\overline{g}}(B|_{L_v^{\perp}})+(4n-4)[\overline{g}(Bv,v)+\sum_{\alpha\in\{I,J,K\}}\overline{g}(B\alpha v,\alpha v)],
    \end{align*}
    where $L^{\perp}_v$ is with respect to $\overline{g}$.
    Equivalently, the last expression is 
    \begin{equation}\label{tr(Hv) in HPn}
        tr_k(H_v)=(4n-1)\Big[g(Bv,v)+\sum_{\alpha\in\{I,J,K\}}\overline{g}(B\alpha v,\alpha v)\Big]-3tr_{\overline{g}}(B).
    \end{equation}
    Combining (\ref{eq lemma appendix tr)Hv=Hv(Nv,Nv)}) with (\ref{Hv(Nv,Nv) in HPn}) and (\ref{tr(Hv) in HPn}), we get a polynomial equation in $v$. 
    More precisely, using the factor $\overline{g}(v,v)=1$, we obtain the next homogeneous polynomial equation of degree $4$
    \begin{equation}
    \begin{split}
        \overline{g}(Bv,v)\Big((4n-1)\Big[\overline{g}(Bv,v)+\sum_{\alpha\in\{I,J,K\}}\overline{g}(B\alpha v,\alpha v)\Big]-3tr_{\overline{g}}(B)\overline{g}(v,v)\Big)\\
        =
        (4n-1)\Big[\overline{g}(Bv,v)^2+\sum_{\alpha\in\{I,J,K\}}\overline{g}(Bv,\alpha v)^2\Big]-3\overline{g}(Bv,Bv)\overline{g}(v,v).
    \end{split}
    \label{main eq lemma appendix}
\end{equation}
The last equation holds for any $v\in\R^{4n}$ since the equation is homogeneous.
    
    Using the coordinates $v=\sum_iv^ie_i$, we will determine the coefficients of the polynomial equation (\ref{main eq lemma appendix}).
    To simplify computations, let us introduce $Q=-\sum_{\alpha\in\{I,J,K\}}\alpha B\alpha$, and
    $Q_{ij}:=\overline{g}(Qe_i,e_j)$.
    In particular, $Q_{ij}=Q_{ji}$. 
    In the following, we use the Einstein sum convention. 
    
    Let us open the first line of (\ref{main eq lemma appendix}) for $v=v^ie_i$, that is
    \begin{equation*}\label{eq LHS of polynomial equation of degree 4}
        (\lambda_k(v^k)^2)\big((4n-1)[(\lambda_i(v^i)^2)+Q_{ij}v^iv^j]-3tr_{\overline{g}}(B)(v^i)^2\delta_{ij}),
    \end{equation*}
    the second line is determined by
    \begin{equation*}\label{eq RHS of polynomial equation of degree 4}
        (4n-1)[(\lambda_i(v^i)^2)^2+\sum_{\alpha\in\{I,J,K\}}(v^iv^j\lambda_i\overline{g}(e_i,\alpha e_j))^2]-3\lambda_i^2(v^i)^2((v^j)^2\delta_{jj}),
    \end{equation*}
    the last two identities must coincide, and determine an equality of homogeneous polynomials of degree $4$, so we must have equality of coefficients.
    
    First, the coefficient for $(v^i)^4$ gives the equation
    $$\lambda_i((4n-1)[\lambda_i+Q_{ii}]-3tr_{\overline{g}}(B))=(4n-1)\lambda_i^2-3\lambda_i^2,$$
    equivalently
    \begin{equation}\label{eq Q_ii}
        Q_{ii}=\frac{3tr_{\overline{g}}(B)-3\lambda_i}{4n-1},\quad\forall i.
    \end{equation}
    The coefficient of $(v^i)^2(v^j)^2$ for $i\neq j$ gives
    \begin{align*}
        (4n-1)[2\lambda_i&\lambda_j+\lambda_iQ_{jj}+\lambda_jQ_{ii}]-3tr_{\overline{g}}(B)(\lambda_i+\lambda_j)\\
        &=(4n-1)[2\lambda_i\lambda_j+(\lambda_i-\lambda_j)^2\sum_{\alpha\in\{I,J,K\}}\overline{g}(e_i,\alpha e_j)^2]-3\lambda_i^2-3\lambda_j^2,
    \end{align*}
    we can simplify the last equation using (\ref{eq Q_ii}) for $i$ and $j$ to we get
    $$(\lambda_i-\lambda_j)^2\Big[3-(4n-1)\sum_{\alpha\in\{I,J,K\}}\overline{g}(e_i,\alpha e_j)^2\Big]=0.$$
    To simplify notation, for $i\neq j$ consider in $\R^3$ the vector 
    $$e_{ij}:=\big(\overline{g}(e_i,\alpha e_j)\big)_{\alpha\in\{I,J,K\}}\in\R^3.$$
    With this notation the last equation is simplified to
    \begin{equation}\label{eq (lambda_i-lambda_j)^2(e-(4n-1)sum)=0}
        (\lambda_i-\lambda_j)^2(3-(4n-1)|e_{ij}|^2)=0.
    \end{equation}
    
    Let us determine the remaining coefficients.
    The coefficient of $(v^i)^3v^j$ for $j\neq i$ gives
    \begin{equation}\label{eq Q_ij=0}
        Q_{ij}=0,\quad i\neq j.
    \end{equation}
    Now, the coefficient $(v^i)^2v^jv^k$ with $i,j,k$ distinct we get the relation
    $$\lambda_i(Q_{kj}+Q_{jk})=2(\lambda_i-\lambda_j)(\lambda_i-\lambda_k)\sum_{\alpha\in\{I,J,K\}}\overline{g}(e_i,\alpha e_j)\overline{g}(e_i,\alpha e_k),$$
    and using (\ref{eq Q_ij=0}) we get
    \begin{equation}\label{eq appendix (lambda-lambda)(lambda-lambda)eij,eik}
        (\lambda_i-\lambda_j)(\lambda_i-\lambda_k)\inner{e_{ij}}{e_{ik}}=0,\quad i,j,k\text{ distinct}.
    \end{equation}
    Finally, the coefficient of $v^iv^jv^kv^l$ for $i,j,k,l$ distinct we get
\begin{equation}
    \begin{split}
        0=&(\lambda_i-\lambda_j)(\lambda_k-\lambda_l)\inner{e_{ij}}{e_{kl}}+(\lambda_i-\lambda_k)(\lambda_j-\lambda_l)\inner{e_{ik}}{e_{jl}}\\
        &+(\lambda_i-\lambda_l)(\lambda_j-\lambda_k)\inner{e_{il}}{e_{jk}},\quad i,j,k,l\text{ distinct}.
    \end{split}
    \label{eq lemma appendix coefficient i,j,k,l}
\end{equation}

Next, we will use (\ref{eq (lambda_i-lambda_j)^2(e-(4n-1)sum)=0}), (\ref{eq appendix (lambda-lambda)(lambda-lambda)eij,eik}), and (\ref{eq lemma appendix coefficient i,j,k,l}) to prove that $\lambda_i=\lambda_j$ for any $i,j$.
To do so, we prove a series of claims. 

First, decompose $\R^{4n}=\bigoplus_{i=1}^NE_i$ in the eigenspaces of $B$, and $m_i=\dim(E_i)$ the multiplicity of each eigenvalue. We assume that $m_1=\max_i\{m_i\}$.

\text{ }

\textbf{Claim 1.} $N\leq4$.

Indeed, let $e_i\in E_i$, we have that $\{e_{1i}\}_{i=2}^N\subseteq\R^3$ is a set of non-zero vectors by (\ref{eq (lambda_i-lambda_j)^2(e-(4n-1)sum)=0}). However, by (\ref{eq appendix (lambda-lambda)(lambda-lambda)eij,eik}) they are also orthogonal, so $N-1\leq3$.

\text{ }

\textbf{Claim 2.} If $m_1\geq4$ then $m_1=4n$, and so $N=1$.

Suppose by contradiction that $N\geq2$. Let $\{e_1^1,\ldots e_1^{m_1}\}$ be an orthogonal basis of $E_1$, and $e_2\in E_2$ a unit vector.
In this case
$$e^k_{21}:=\big(\overline{g}(e_2,\alpha e_1^k)\big)_{\alpha\in\{I,J,K\}}\in\R^3,$$
is non-zero by (\ref{eq (lambda_i-lambda_j)^2(e-(4n-1)sum)=0}). Moreover, by (\ref{eq appendix (lambda-lambda)(lambda-lambda)eij,eik}) the set $\{e^k_{21}\}_{k=1}^{m_1}$ is also orthogonal, which is a contradiction since $\dim(\R^3)=3<4\leq m_1$.

\text{ }

\textbf{Claim 3.} $m_1\neq3$.

Suppose by contradiction that $m_1=3$, in particular $N\geq2$. 
We cannot have $N=2$ otherwise $8\leq 4n=m_1+m_2\leq2m_1=6$.
Hence, $N\geq3$.
Consider an orthogonal basis $\{e_1,e_2,e_3\}$ of $E_1$, and take unit vectors $e_4\in E_2$ and $e_5\in E_3$.
In this case, $\mathcal{B}:=\{e_{41},e_{42},e_{43}\}$ is an orthogonal basis of $\R^3$ (this follows from (\ref{eq (lambda_i-lambda_j)^2(e-(4n-1)sum)=0}) and (\ref{eq appendix (lambda-lambda)(lambda-lambda)eij,eik})).
However, $e_{45}\in\R^3$ is non-zero by (\ref{eq (lambda_i-lambda_j)^2(e-(4n-1)sum)=0}), but $e_{45}$ is also orthogonal to the elements of $\mathcal{B}$ by (\ref{eq appendix (lambda-lambda)(lambda-lambda)eij,eik}), which proves that $e_{45}$ is zero, a contradiction. 

\text{ }

\textbf{Claim 4.} $m_1\neq1,2$

Assume by contradiction that $m_1\leq2$. 
In this case, by Claim 1 we get
$$8\leq 4n=\sum_{i=1}^Nm_i\leq Nm_1\leq 4\cdot2=8,$$
so $N=4$ and $m_i=2$ for any $i$.
Let $\{e_{2i-1},e_{2i}\}$ be an orthogonal basis of $E_i$.
In this case $\{e_{1k}\}_{k=3}^8$ is a set of non-zero vectors of $\R^3$ (by (\ref{eq (lambda_i-lambda_j)^2(e-(4n-1)sum)=0}). However, by (\ref{eq appendix (lambda-lambda)(lambda-lambda)eij,eik}) they are also orthogonal, which is a contradiction.

By Claims 1 to 4, we have proved that $N=1$ and $m_1=4n$, that is, $B=\lambda \text{Id}$.
In this case, since $A=B^{-1}$, we have $k(X,Y)=\overline{g}(AX,Y)=\lambda^{-1}g(X,Y)$.
\end{proof}

\printbibliography

@article{AMN,
author = {Ambrozio, L. and C. Marques, F. and Neves, A.},
title = {\textit{Riemannian metrics on the sphere with Zoll families of minimal hypersurfaces}},
year = {2025},
journal = {J. Differ. Geom.},
pages = {269-341},
volume = {\textbf{130}},
}

@article{AGequiv,
author = {Ambrozio, L. and Guajardo, D.},
title = {\textit{Equivariant constructions of spheres with Zoll families of minimal spheres}},
year = {2026},
journal = {Adv. Math. },
volume = {\textbf{496}},
pages = {110989},
}

@article {Funk,
    AUTHOR = {Funk, P.},
     TITLE = {\textit{\"Uber {F}l\"achen mit lauter geschlossenen geod\"atischen
              {L}inien}},
   JOURNAL = {Math. Ann.},
  FJOURNAL = {Mathematische Annalen},
    VOLUME = {\textbf{74}},
      YEAR = {1913},
    NUMBER = {no. 2},
     PAGES = {278-300},
}

@article {Kiyohara,
    AUTHOR = {Kiyohara, K.},
     TITLE = {\textit{On infinitesimal {$C\sb{2\pi }$}-deformations of standard
              metrics on spheres}},
   JOURNAL = {Hokkaido Math. J.},
  FJOURNAL = {Hokkaido Mathematical Journal},
    VOLUME = {\textbf{13}},
      YEAR = {1984},
    NUMBER = {(2)},
     PAGES = {151--231},
}

@article{Zoll,
author = {Zoll, O.},
year = {1903},
pages = {108-133},
title = {\textit{Über Flächen mit Scharen geschlossener geodätischer Linien}},
volume = {\textbf{57}},
journal = {Math. Ann.},
}

@article{Bes,
author= {Besse, A.},
year = {1978},
title = {\textit{Manifolds all of whose geodesics are closed}},
journal = {Ergebnisse der Mathematik
und ihrer Grenzgebiete},
volume = {\textbf{93}},
}

@article{Guillemin,
author = {Guillemin, V.},
year = {1976},
pages = {85-119},
title = {\textit{The Radon transform on Zoll surfaces}},
volume = {\textbf{22}},
journal = {Advances in Math.},
number = { no. 1},
}

@article{Gre,
author= {Green, L. W.},
year = {1963},
title = {\textit{Auf Wiedersehensfläche}},
journal = {Ann. of Math.},
number = { (2)},
volume = {\textbf{78}},
pages = {289-299}
}

@article{Luzzi,
author= {Luzzi, L.},
title = {\textit{Balanced metrics, Zoll deformations and isosystolic inequalities in $\C \mathbb{P}^n$}},
journal = {Sel. Math. New Ser.},
year = {2026},
volume = {32},
pages = {40}
}

@article{GrinbergHarmonicsProjective,
author = {Grinberg, E. L.},
year = {1983},
pages = {187-203},
title = {{\it Spherical Harmonics and Integral Geometry on Projective Spaces}},
journal = {Trans. Am. Math. Soc.},
volume ={\textbf{1}}
}

@article{HanganMinimalHyperplanes,
author = {Hangan, T.},
year = {1996},
pages = {326-334},
title = {{\it On the Riemannian metrics in $\mathbb{R}^n$ which admit all hyperplanes as minimal hypersurfaces}},
journal = {J. Geom. Phys.},
volume ={\textbf{18}},
}

@article{AmbrozioMinimalEquators,
author= {Ambrozio, L.},
title = {\textit{Spheres with minimal equators}},
journal = {São Paulo J. Math. Sci.},
year = {2026},
volume = {20},
pages = {25}
}

@article{GotohTCayley,
author= {Gotoh, T.},
title = {\textit{The nullity of a compact minimal hypersurface in a compact symmetric space of rank one}},
journal = {Hokkaido Math. J.},
year = {2004},
volume = {33},
pages = {429-441}
}

@article{MatveevNikolayevskyKillingNODecomposable,
author= {Matveev, V. and Nikolayevsky, Y.},
title = {\textit{Quadratic Killing tensors on symmetric spaces which are not
generated by Killing vector fields}},
journal = {C. R. Math. Acad. Sci. Paris},
year = {2024},
volume = {\textbf{362}},
pages = {1043–1049}
}

@article{GustavoZollcharact,
author= {Martins, G.},
title = {\textit{Topological and spectral rigidity of hypersurface Zoll manifolds}},
journal = {preprint arXiv:2609.20689},
year = {},
volume = {},
pages = {}
}

@book{Helgason1999Radon,
  author    = {Helgason, Sigurdur},
  title     = {The Radon Transform},
  edition   = {2nd},
  publisher = {Birkh{\"a}user},
  address   = {Boston, MA},
  year      = {1999},
  doi       = {10.1007/978-1-4757-1463-0}
}

\end{document}